\documentclass[12pt,a4paper]{article}

\usepackage[utf8]{inputenc}
\usepackage[T1]{fontenc}

\usepackage{amsmath}
\usepackage{amsthm}
\usepackage{amssymb}
\usepackage{amsfonts}
\usepackage{dsfont}
\usepackage{url}
\usepackage{graphicx}
\usepackage{hyperref}
\usepackage{color}
\usepackage{tikz}
\usepackage{pgfplots}
\usetikzlibrary{shapes.misc}
\usepackage{enumerate}
\usepackage{fullpage}

\theoremstyle{plain}
\newtheorem{theorem}{Theorem}[section]
\newtheorem{lemma}[theorem]{Lemma}

\newtheorem{remark}[theorem]{Remark}
\newtheorem{as}{Assumption}

\pgfplotsset{compat=1.13}
\setlength{\parindent}{0pt}


\renewcommand{\P}{\mathbb{P}} 
\newcommand{\E}{\mathbb{E}} 
\newcommand{\N}{\mathbb{N}} 
\newcommand{\R}{\mathbb{R}} 
\newcommand{\limn}{\lim_{n \to \infty}}

\newcommand{\Addresses}{{%
  \bigskip
  \footnotesize

  P.~Dyszewski \textsc{Instytut Matematyczny Uniwersytetu Wroc\l{}awskiego, Pl. Grunwaldzki 2/4 50-384, 
   Wroc\l{}aw, Poland}\par\nopagebreak
 \textit{E-mail address:} \texttt{piotr.dyszewski@math.uni.wroc.pl}\par\nopagebreak
and\par\nopagebreak
\textsc{Fakultät für Mathematik, Technische Universität München, \newline 
Boltzmannstr.~3, 85748 Garching, Germany}\par\nopagebreak
  \medskip

  N.~Gantert, \textsc{Fakultät für Mathematik, Technische Universität München, \newline 
  Boltzmannstr.~3, 85748 Garching, Germany}\par\nopagebreak
  \textit{E-mail address:} \texttt{gantert@ma.tum.de}
  
  \medskip
  
  T.~Höfelsauer, \textsc{Fakultät für Mathematik, Technische Universität München, \newline 
  Boltzmannstr.~3, 85748 Garching, Germany}\par\nopagebreak
  \textit{E-mail address:} \texttt{thomas.hoefelsauer@tum.de}

 }}

\begin{document}
\title{Large deviations for the maximum of a branching random walk with stretched exponential tails}
\author{Piotr Dyszewski, Nina Gantert and Thomas H\"ofelsauer}
\maketitle

\begin{abstract}
	We prove large deviation results for the position of the rightmost particle, denoted by $M_n$, in a one-dimensional branching random walk in a case when Cram\'er's condition is not 
	satisfied. More precisely we consider step size distributions with stretched exponential upper and lower tails, i.e.~both tails decay as $e^{-|t|^r}$ for some $r\in( 0,1)$. It is known that in this case, $M_n$ grows as 
	$n^{1/r}$ and in particular	faster than linearly in $n$. Our main result is a large deviation principle for the laws of 
	$n^{-1/r}M_n$ . In the proof we use a comparison with the maximum of (a random number of) independent random walks, denoted by $\tilde M_n$, and we show a large deviation principle for the laws of 
	$n^{-1/r}\tilde M_n$ as well.
 
\medskip
\noindent \textbf{Keywords:} branching random walk, large deviations, stretched exponential random variables

\smallskip
\noindent \textbf{AMS 2000 subject classification:} 60F10, 60J80, 60G50. 
\end{abstract}

\section{Introduction} \label{intro}

	We study branching random walk, which is a discrete time Galton-Watson processes with a spatial component. Given a reproduction law and a step size distribution the 
	evolution of the process can be described as follows.  At time $n=0$ we place one particle at the origin of the real line. At time $n=1$ this particle splits according to 
	the reproduction law and each new particle performs an independent step, according to the step size distribution. We assume that the branching mechanism and the displacements 
	are independent. From here, each particle evolves in the same way,
	independently of all other particles. \\
	We are interested in the position of the rightmost particle at time $n \in \mathbb{N}_0$, which we will denote by $M_n$. In the case when the step size distribution satisfies Cram\'er's condition,
	the asymptotic behaviour of $M_n$ is fairly well understood (see the recent monograph~\cite{S15} and references therein). We will investigate a case of heavy-tailed steps, 
	when Cram\'er's condition if not satisfied. More specifically, we consider the case of steps with stretched exponential distribution, when both tails decay as 
	$e^{-|t|^r}$ for some $r\in( 0,1)$. Then it is known that $M_n$ grows like $n^{1/r}$ as established by a law of large numbers proved in~\cite{G00}.
	In the present article we will provide the corresponding large deviation results. Our main result is Theorem \ref{theorem_BRW}  which gives a large deviation principle for the laws of $M_n/n^{1/r}$.
This complements previous results on large deviations for branching random walks given by
\cite{BM17}, \cite{CH17}, \cite{CH18}, \cite{GH18}, \cite{LP15}, \cite{LT17}, \cite{R93}.

The article is organized as follows. In Section~\ref{prel} we present the necessary preliminaries concerning the step size distribution and the branching mechanism and provide the necessary notation.
 The main results are presented in Section~\ref{sec:main} and are proved in Section~\ref{sec:proofs}.

\section{Preliminaries}\label{prel}

We write $f \sim g$ for two functions $f,g \colon \N \to \R$ whenever $f(n)/g(n) \to 1$ as $n \to \infty$. We call a sequence $(a_n)_{n \geq 0}$ subexponential if for 
	any $\varepsilon>0$, $a_n e^{-\varepsilon n} \to 0$ as $n \to \infty$. 
	
	\subsection{Step size distribution}
Let $X, X_1, X_2, \ldots$ be a collection of iid random variables of zero mean and let $S = (S_n)_{n \geq 0}$ be the corresponding random walk, that is $S_0=0$, 
	$S_n = \sum_{k=1}^n X_k$. 
	In the case when Cram\'er's condition holds, i.e.
	\begin{equation}\label{eq:cramer}
	\E[e^{sX}]<\infty \quad \mbox{for some }s>0 
	\end{equation}
	it is well known that
	$\lim_{n \to \infty} n^{-1} \log \P(S_n>xn) = -\sup_{s \geq 0}\left\{sx - \log \E[e^{sX}] \right\}$ for  $x >0$,	 see~\cite{DZ10}. If on the other hand, $\E[e^{sX}]=\infty$ for any $s>0$, it is known that the probabilities $\P(S_n>xn)$ decay slower than exponentially in $n$ with the exact rate
	 being determined by the behaviour of the tail $\P(X>x)$ as $x \to \infty$. We will focus on the case of stretched exponential distribution.

	\begin{as} \label{assumption_RW}
		The random variable $X$ is centred $(\E [ X ] =0)$ and has stretched exponential upper and lower tails, that is there exist $\lambda_+, \lambda_->0$, $r \in (0,1)$ and slowly varying 
		functions $a_+$ and $a_-$ such that
		\begin{equation}\label{upptail}
			\P(X \geq x)=a_+(x) e^{-\lambda_+ x^r}
			\end{equation}
			and
			\begin{equation}\label{lowtail}
			 \P(X \leq -x)=a_-(x) e^{-\lambda_- x^r}
		\end{equation}
		for all $x\geq 0$.
	\end{as}
\begin{remark}
We assume \eqref{lowtail} for simplicity, to avoid an annoying number of case 
distinctions: our strategy of proof can be used for other lower tails, see also Remark \ref{moregeneral}.
\end{remark}

	Large deviations for a random walk in the case when Cram\'er's condition is not fulfilled go back to~\cite{N69} where it was established that if the law of $X$ has a density that decays as $e^{-|x|^r}$ as $|x| \to \infty$, then
$\lim_{n \to \infty} n^{-r} \log \P(S_n > nx) = -x^r$
	for $x>0$. It is easy to see, that in order to obtain an exponential rate of decay under Assumption~\ref{assumption_RW} one has to consider probabilities of the 
	form $\P(S_n \geq xn^{1/r})$. 

	\begin{lemma} \label{lem:LDP_RW}
		Let Assumption \ref{assumption_RW} hold. Then, the laws of $\frac{S_n}{n^{1/r}}$ satisfy a large deviation principle with rate function $I$ given by
			\begin{equation} \label{def_I}
				I(x)= \begin{cases}
				\lambda_+ x^r & \text{for } x \geq 0\\
				\lambda_- (-x)^r & \text{for } x \leq 0.\\
				\end{cases}
		\end{equation}
	\end{lemma}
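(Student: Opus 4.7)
The plan is to reduce the claimed LDP to the one-sided tail asymptotics
\[
\lim_{n \to \infty} \tfrac{1}{n}\log \P\!\left(S_n \geq xn^{1/r}\right) \;=\; -\lambda_+ x^r \qquad (x > 0),
\]
together with the symmetric statement for the lower tail. Since $I$ is continuous, strictly monotone on each half-line, and satisfies $I(x) \to \infty$ as $|x|\to\infty$, it is a good rate function, and the two tail asymptotics imply the full LDP by the standard device of splitting a closed (resp.\ open) set into its positive and negative parts and using monotonicity of $I$ on each half-line.

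For the \emph{lower bound} I would use a one-big-jump construction. Writing $S_n = X_1 + (X_2 + \cdots + X_n)$ as a sum of two independent pieces,
\[
\P(S_n \geq xn^{1/r}) \;\geq\; \P(X_1 \geq xn^{1/r})\,\P(S_{n-1}\geq 0).
\]
The second factor tends to $1/2$ by the central limit theorem; the first equals $a_+(xn^{1/r})\,e^{-\lambda_+ x^r n}$ by Assumption~\ref{assumption_RW}, and $\tfrac{1}{n}\log a_+(xn^{1/r}) \to 0$ by slow variation. This already yields $\liminf_n \tfrac{1}{n}\log \P(S_n \geq xn^{1/r}) \geq -\lambda_+ x^r$. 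The lower-tail analogue is identical up to signs.

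For the \emph{upper bound}, the natural strategy is to split, for small $\delta > 0$,
\[
\{S_n \geq xn^{1/r}\} \;\subseteq\; \{\max_{i\leq n} X_i \geq (x-\delta)n^{1/r}\} \,\cup\, \{S_n \geq xn^{1/r},\ \max_{i\leq n} X_i < (x-\delta)n^{1/r}\}.
\]
The first event is controlled by the union bound $n\,\P(X\geq (x-\delta)n^{1/r})$, giving $n^{-1}\log$-rate $-\lambda_+(x-\delta)^r$, which tends to $-\lambda_+ x^r$ as $\delta\to 0$. The \emph{main obstacle} is the second ``no big jump'' event: a direct Chernoff bound is unavailable because Cram\'er's condition fails for $r<1$. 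I would resolve this by appealing to the classical one-big-jump principle for sums of subexponential random variables, whose logarithmic form in our large-deviation regime reads $\P(S_n\geq t_n)\sim n\,\P(X\geq t_n)$ whenever $t_n/\sqrt{n\log n}\to\infty$ — a condition satisfied by $t_n = xn^{1/r}$ for any $r\in(0,1)$. Concretely this can be obtained from a Fuk--Nagaev-type inequality applied to the truncated walk $\sum_i X_i \mathbf{1}_{\{X_i < (x-\delta) n^{1/r}\}}$, whose steps now have a finite moment generating function; a careful choice of the Chernoff parameter, balanced against the truncation level, produces a bound that is strictly smaller in exponential order than the one-big-jump term. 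Letting $\delta \to 0$ then completes the upper bound, and combining with the lower bound gives the tail asymptotic from which the LDP follows.
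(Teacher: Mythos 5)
The paper offers no proof of this lemma: it simply cites the main results of \cite{GRR14}. Your outline is essentially the argument of that reference (and of Nagaev's classical work): a one-big-jump lower bound and a truncation-based upper bound, followed by the standard reduction from tail asymptotics to a full LDP via monotonicity of $I$ on each half-line. The lower bound as you wrote it is complete and correct, and the reduction to one-sided tails is fine.

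Two points in your upper bound need repair. First, the exact equivalence $\P(S_n\geq t_n)\sim n\,\P(X\geq t_n)$ does \emph{not} hold under the condition $t_n/\sqrt{n\log n}\to\infty$ for stretched exponential tails; the big-jump domain here is $t_n\gg n^{\sigma}$ with $\sigma=\tfrac{1}{2-2r}$ (this is exactly the paper's \eqref{sumasmax}, quoted from \cite{DDS08}). Since $\tfrac1r>\tfrac{1}{2-2r}$ only for $r<\tfrac23$, the level $t_n=xn^{1/r}$ lies \emph{outside} the big-jump domain when $r\in(\tfrac23,1)$, and the exact asymptotic you invoke is then false; only its logarithmic version survives, which is what you actually need, so the error is reparable but should not be stated as you did. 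Second, and more substantively: with truncation at level $y=(x-\delta)n^{1/r}$, the textbook Fuk--Nagaev inequality controls the truncated walk only through truncated second moments, and with $t\asymp y\asymp n^{1/r}$ it yields a bound of order $\bigl(Cn\,\E[X^2]/(ty)\bigr)^{t/y}$, which is merely \emph{polynomially} small in $n$ --- far too weak against the target $e^{-\lambda_+x^rn}$. To make your "careful choice of the Chernoff parameter" work, you must estimate $\E\bigl[e^{sX}\mathbf{1}_{\{X<y\}}\bigr]$ using the stretched exponential tail itself: with $s=\theta n^{1-1/r}$ and $\theta<\lambda_+(x-\delta)^{r-1}$ one gets $n\log\E\bigl[e^{s X}\mathbf{1}_{\{X< y\}}\bigr]=o(n)$, whence the no-big-jump event has probability at most $\exp\bigl(-\lambda_+(x-\delta)^{r-1}x\,n+o(n)\bigr)$, which is indeed of strictly smaller exponential order than $e^{-\lambda_+x^rn}$ because $(x-\delta)^{r-1}x>x^r$. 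Without this tail-specific moment generating function estimate (or an equivalent decomposition over the number of moderately large jumps), the step as described would fail.
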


	This lemma follows, for example, from the main results in~\cite{GRR14}.

\subsection{Branching mechanism} 
	Let $Z = (Z_n)_{n \geq 0}$ be a Galton-Watson process with $Z_0 =1$ and reproduction law $ p =(p(k))_{k\geq 0}$. 
The key parameter describing the asymptotic behaviour of 
	$Z$ is the mean of $p$ denoted by
	\begin{equation*}
		m = \sum_{k \geq 0} k p(k).
	\end{equation*}
	It is well-known (assuming $p(1)<1$ to rule out a degenerate case)  that the branching process survives with positive probability if and only if $m >1$. More precisely, if we denote by $q= \P(\lim_{n \to \infty} Z_n =0)$ 
	the extinction probability, then $q$ is the smallest solution of the equation $f(s) = s$,
	where $f$ denotes the probability generating function of $p$, that is $f(s) = \sum_{k\geq 0}s^kp(k)$. If $m  = f'(1-)>1$, then we see that $q<1$. 
	On the event of survival, $Z_n \to \infty$  as $n \to \infty$ and the Kesten-Stigum Theorem (see Theorem~\ref{Lemma_GWP_martingale}) asserts that $Z_n/m^n$ converges almost surely to a finite and strictly positive random variable,
	provided that $\E[ Z_1 \log Z_1] <\infty$. 
Our assumptions on the branching process will be the following.
	\begin{as} \label{as:BP}
The Galton-Watson process $Z$ is supercritical, that is $m>1$, and $\E[ Z_1 \log Z_1] <\infty$.
	\end{as}
	
	We introduce the conditional probability
	\begin{equation}\label{Pstardef}
		\P^*(\: \cdot \: ) = \P(\: \cdot \:  | \: Z_n>0, \:\forall \: n \in\N ).
	\end{equation}
	We will need the rate of decay of the probabilities that $Z_n$ grows slower than $m^n$. It turns out 
	that one has to distinguish between two cases, namely $p(0) + p(1) > 0$ and 
	$p(0) +p(1) = 0$. The first one is often referred to as the Schröder case, while the latter is called Böttcher case. We see that in the second case ($p(0)+p(1)=0$), $Z$ should 
	grow faster since $Z_n \geq 2^n$ for all $n$. Denote the set of all possible values of $Z$ by
	\begin{equation} \label{def_A}
		A= \bigl\{l \in \N \colon \exists n \in \N \text{ such that } \P(Z_n=l)>0 \bigr\}.
	\end{equation}
	Let $k^*$ be smallest possible number of offspring, that is
	\begin{equation} \label{def_k*}
		k^*=\inf\{k \geq 1 \colon p(k)>0 \}.
	\end{equation}
	Note that $k=k^*$ is the smallest positive integer, such that $\P(Z_n=k)>0$ for some $n \in \N$. In the Böttcher case we have $k^*\geq 2$. 

\subsubsection{Schröder case: $p(0)+p(1)>0$}
	In this case the rate of deviations for the branching process $Z$ is determined by
	\begin{equation}\label{eq:defRho}
		\rho = - \log f'(q) = -\log \E \left[Z_1 q^{Z_1-1}\right] \in (0, \infty)\, .
	\end{equation}
	Note that $\rho = -\log p(1)$ if $p(0)=0$ (and therefore $q=0$). Moreover $p(0)+p(1)>0$ is necessary and sufficient for $\rho<\infty$.
	We summarize several results concerning the lower deviations of $Z$ in the following Lemma. 
	
	\begin{lemma}\label{lem:LDBP_S}
		Let Assumption~\ref{as:BP} be in force and suppose that $p(0)+p(1)>0$. Then,
		\begin{itemize}
			\item For any $k \in A$,
				\begin{equation*}
					\limn \frac{1}{n} \log \P^* \bigl(Z_n =k \bigr) = - \rho.
				\end{equation*}
			\item For every subexponential sequence $(a_n)_{n \geq 0}$ such that $a_n \to \infty$ as $n \to \infty$,
				\begin{equation*}
					\limn \frac{1}{n} \log \P^* \bigl(Z_n \leq a_n \bigr) = - \rho.
				\end{equation*}
			\item For any $x \in [0, \log m]$,
				\begin{equation}\label{def_I_GW_S}
					\limn \frac{1}{n} \log \P^* \bigl( Z_n \leq e^{xn} \bigr) = -  I^{\textnormal{GW}}(x) := -\rho \bigl(1-x (\log m)^{-1} \bigr).
				\end{equation} 
		\end{itemize}
	\end{lemma}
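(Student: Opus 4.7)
The overall plan is to treat part (i) as the fundamental input (via the classical Schröder asymptotics) and then bootstrap from it to prove (ii) and (iii). Conditioning on survival is compatible with each assertion because $\P(Z_n=k,\text{survives}) = \P(Z_n=k)(1-q^k)$ for $k\geq 1$, so $\P^*(Z_n\in \,\cdot\,)$ and $\P(Z_n\in \,\cdot\,)$ differ only by factors that are either bounded constants (when $k$ is fixed) or lie in $[1-q,1]/(1-q)$.

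For part (i), I recall that the generating function satisfies $f'(q)=e^{-\rho}$ and that in the Schröder regime ($p(0)+p(1)>0$) there exists a Schröder function $\psi$ solving $\psi(f(s))=f'(q)\psi(s)$ near $q$, from which one obtains the classical asymptotics $\P(Z_n=k)\sim c_k\, e^{-n\rho}$ for each $k\in A$ with $c_k>0$. This immediately yields (i) after accounting for the conditioning on survival.

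For part (ii), the lower bound is monotone: since $a_n\to\infty$, eventually $a_n\geq k^*$, so $\P^*(Z_n\leq a_n)\geq \P^*(Z_n=k^*)$ and (i) gives the matching lower bound. For the upper bound, set $g_n=\P(Z_n\leq a_n)$ and exploit the branching decomposition at the root: writing $Z_n$ as a sum of $Z_1$ independent copies of $Z_{n-1}$ and using that each summand must individually be $\leq a_n$, one obtains (after enforcing monotonicity of $(a_n)$, WLOG) $g_n\leq f(g_{n-1})$. Since $f^n(0)$ is also a fixed iteration of $f$ starting from $0$, a mean-value argument gives $g_n-f^n(0)\leq f'(\xi_{n-1})(g_{n-1}-f^{n-1}(0))$ for some $\xi_{n-1}\in[f^{n-1}(0),g_{n-1}]$. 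Both endpoints converge to $q$, so $f'(\xi_{n-1})\to f'(q)=e^{-\rho}$ and iteration produces $g_n-f^n(0)=O(e^{-n\rho(1-o(1))})$, which combined with $\P^*(Z_n\leq a_n)\leq (g_n-f^n(0))/(1-q)$ yields the upper bound.

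For part (iii), fix $x\in(0,\log m)$, set $k_n=\lfloor n(1-x/\log m)\rfloor$, so that $m^{n-k_n}\asymp e^{xn}$. The lower bound is obtained by restricting to $\{Z_{k_n}=k^*\}$ and using the Kesten--Stigum theorem: conditionally on $Z_{k_n}=k^*$, the quantity $Z_n/m^{n-k_n}$ converges to a sum of $k^*$ independent copies of the martingale limit $W$, which has positive mass near $0$; hence $\P(Z_n\leq e^{xn}\mid Z_{k_n}=k^*)$ stays bounded away from $0$, and part (i) gives $\P^*(Z_{k_n}=k^*)\asymp e^{-k_n\rho}$. The upper bound is the main obstacle: decomposing at time $k_n$,
\[
\P(Z_n\leq e^{xn})=\sum_{j\geq 0}\P(Z_{k_n}=j)\,\P\Bigl(\textstyle\sum_{i=1}^j Z_{n-k_n}^{(i)}\leq e^{xn}\Bigr)\leq f^{k_n}(p_n),
\]
where $p_n=\P(Z_{n-k_n}\leq m^{n-k_n})\to \P(W\leq 1)\in(q,1)$ by Kesten--Stigum. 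Since $f^{n}(0)=f^{k_n}(f^{n-k_n}(0))$ and both $p_n$ and $f^{n-k_n}(0)$ stay in a compact subset of $[q,1)$, a mean-value estimate as in (ii) shows $f^{k_n}(p_n)-f^n(0)=O(e^{-k_n\rho})$, giving the desired bound on $\P^*(1\leq Z_n\leq e^{xn})$.

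The hardest step is the upper bound in (iii): it requires simultaneously using the Kesten--Stigum convergence to show that the quantity $p_n$ stays bounded strictly below $1$ and controlling the contraction of $f^{k_n}$ along trajectories issuing from points in $[q,p_n]$, which is precisely where Assumption~\ref{as:BP} ($\E[Z_1\log Z_1]<\infty$) enters crucially.
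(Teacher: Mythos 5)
First, a point of comparison: the paper does not actually prove Lemma~\ref{lem:LDBP_S} — part (i) is quoted from Athreya--Ney and parts (ii)--(iii) from Fleischmann--Wachtel, and only the logarithmic asymptotics are used later. Your proposal to take (i) as input and derive (ii) and (iii) by generating-function iteration is therefore a genuinely different, self-contained route, and its architecture (decompose at an intermediate generation, control $f^{(k)}$ near the fixed point $q$ via $f'(q)^k=e^{-\rho k}$, use Kesten--Stigum at the splitting time) is the right one.

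However, the upper bound in part (ii) has a step that fails as written. From $Z_n=\sum_{i=1}^{Z_1}Z_{n-1}^{(i)}$ you only obtain $\P(Z_n\le a_n)\le f\bigl(\P(Z_{n-1}\le a_n)\bigr)$, and since $a_n\to\infty$ forces $a_n\ge a_{n-1}$ along a subsequence (and your ``WLOG monotone'' normalization makes this hold for all $n$), you have $\P(Z_{n-1}\le a_n)\ge g_{n-1}$; as $f$ is increasing, this does \emph{not} give $g_n\le f(g_{n-1})$, so the one-step recursion underlying your mean-value iteration is unavailable. The standard repair is to freeze the threshold and decompose once at a single intermediate time: choose $j_n=o(n)$ with $m^{j_n}\ge n\,a_n$ (possible precisely because $(a_n)$ is subexponential), write $\P(Z_n\le a_n)\le f^{(n-j_n)}\bigl(\P(Z_{j_n}\le a_n)\bigr)$, note that $\P(Z_{j_n}\le a_n)\to q$ by Kesten--Stigum, and then apply the uniform contraction estimate $f^{(k)}(s)-q\le C_\delta\,(e^{-\rho}+\delta)^{k}$ valid for $s$ in a compact subset of $[0,1)$ (finitely many iterations bring any such $s$ into a $\delta$-neighbourhood of $q$, where $f'\le e^{-\rho}+\delta$). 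With this replacement (ii) closes, and it is the same device you correctly use in (iii) with the block at time $k_n$. Two smaller gaps in (iii) deserve a sentence each: the lower bound needs $\P(0<W<\varepsilon)>0$ for every $\varepsilon>0$, and the upper bound needs $\sup_n\P(W\le c_n)<1$ where $c_n=e^{xn}m^{-(n-k_n)}$, i.e.\ unboundedness of $W$; both hold in the Schr\"oder case but are not free. Finally, the hypothesis $\E[Z_1\log Z_1]<\infty$ enters through Kesten--Stigum (non-degeneracy of $W$, hence $p_n$ bounded away from $1$ and $\P(Z_{j_n}\le a_n)\to q$), not through the contraction of $f^{(k_n)}$ as your last sentence suggests.
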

	
	The first claim of Lemma~\ref{lem:LDBP_S} can be found in  \cite[Chapter 1, Section 11, Theorem 3]{AN04}, the the last two are special cases of~ \cite[Theorem 4]{FW06}.
	 Note that in all three cases precise asymptotics for the deviations are known.
	In our arguments we will only make use of the logarithmic asymptotics. 

\subsubsection{Böttcher case: $p(0)+p(1)=0$}
	
	We see that whenever $p(0)+p(1)=0$, $\rho$ given in~\eqref{eq:defRho} is infinite. Thus one may expect that $ \P^* \bigl( Z_n \leq e^{xn} \bigr)$ decays faster than 
	exponentially in the Böttcher case.  For example, one can see directly that
	\begin{equation*}
		 \P\left(Z_n=(k^*)^n\right) = \exp\left \{ \frac{(k^*)^n-1}{k^*-1} \log p(k^*) \right\},
	\end{equation*}
	where $k^*$ is given in~\eqref{def_k*}. It turns out that $k^*$ is the key parameter describing the lower deviations of $Z$.  
	We will make use of a result established in~\cite{FW06}.
	
	\begin{lemma}\label{lem:LDBP_B}	
		For any $k_n = o(m^n)$ such that $k_n \geq (k^*)^n$, there are positive constants $B_1, B_2$ such that 
		\begin{align*}
			-B_1 & \leq \liminf_{n \to \infty} (k^*)^{b_n-n} \log \P(Z_n \leq k_n) \\
				& \leq \limsup_{n \to \infty} (k^*)^{b_n-n} \log \P(Z_n \leq k_n) \leq -B_2,
		\end{align*}
		where 
		\begin{equation*}
			b_n = \min \{ j \: | \: m^j(k^*)^{n-j} \geq 2 k_n \}.
		\end{equation*}
	\end{lemma}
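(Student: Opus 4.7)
I would prove the two inequalities separately, abbreviating $k = k^*$ and writing $u_n(\ell) = \P(Z_n \leq \ell)$.

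For the lower bound $\liminf_{n\to\infty} k^{b_n - n}\log \P(Z_n \leq k_n) \geq -B_1$, I would use an explicit strategy. Let $E_n$ be the event that every particle in generations $0, 1, \ldots, n - b_n$ has exactly $k$ offspring, so that $Z_{n - b_n + 1} = k^{n - b_n + 1}$. A count of the $(k^{n-b_n+1}-1)/(k-1)$ branching decisions gives $\log \P(E_n)$ of order $-k^{n-b_n}$. Conditionally on $E_n$, the process continues as $k^{n - b_n + 1}$ independent Galton-Watson trees of depth $b_n - 1$, and by the defining inequality for $b_n$,
$$\E\bigl[Z_n \mid E_n\bigr] = k^{n - b_n + 1} m^{b_n - 1} < 2 k_n.$$
Markov's inequality then yields $\P(Z_n \leq 4 k_n \mid E_n) \geq 1/2$, so $\P(Z_n \leq 4 k_n) \geq \tfrac{1}{2}\P(E_n)$. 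Since replacing $k_n$ by $\lfloor k_n / 4 \rfloor$ shifts $b_n$ only by a bounded amount (the definition forces $(m/k)^{b_n}$ to be of the same order as $k_n/k^n$), the lower bound follows.

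For the upper bound $\limsup_{n\to\infty}k^{b_n - n}\log\P(Z_n \leq k_n) \leq -B_2$ the rough idea is that $\{Z_n \leq k_n\}$ forces a subpopulation of size of order $k^{n - b_n}$ near the root of the tree to branch minimally. I would attempt to formalize this by conditioning on the offspring of the root: decomposing $Z_n = \sum_{i=1}^{Z_1}Z_{n-1}^{(i)}$ and using $Z_{n-1}^{(i)} \geq k^{n-1}$ gives the recursion
$$u_n(k_n) \leq \sum_{j=k}^{\lfloor k_n/k^{n-1}\rfloor}p(j)\, u_{n-1}\bigl(k_n - (j-1)k^{n-1}\bigr)^j,$$
and iterating the dominant $j = k$ term reduces matters to controlling $u_{b_n}(\tilde k)$ at a suitably chosen generation. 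An alternative route uses the Chernoff-type bound $\P(Z_n \leq k_n) \leq \inf_{\theta > 0}e^{\theta k_n}f^{(n)}(e^{-\theta})$ combined with asymptotics of $f^{(n)}(s)$ as $s \to 0$ derived from $f(s) \sim p(k)s^k$ via iteration of the Böttcher functional equation.

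The main obstacle is the upper bound. A naive recursion yields only $\log u_n(k_n) \leq k^{n - b_n}\log u_{b_n}(\tilde k)$, so one must verify that $u_{b_n}(\tilde k)$ is bounded away from $1$ by a constant independent of $n$. This in turn requires controlling the relative size of $\tilde k$ versus $m^{b_n}$---not automatic when $n - b_n$ is large, since $\tilde k$ can then become comparable to $m^{b_n}$---together with a quantitative anti-concentration estimate for $Z_{b_n}/m^{b_n}$. Carrying this out precisely is the technical content of~\cite{FW06}.
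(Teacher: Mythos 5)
The paper offers no proof of this lemma: it is stated as a quotation of a result from \cite{FW06}, so there is no internal argument to compare with. Judged on its own terms, your proposal is half a proof. The lower bound is correct and essentially complete: on the event $E_n$ that all particles up to generation $n-b_n$ branch minimally, $\P(E_n)=p(k^*)^{((k^*)^{n-b_n+1}-1)/(k^*-1)}=\exp(-c\,(k^*)^{n-b_n})$; the minimality of $b_n$ gives $\E[Z_n\mid E_n]=(k^*)^{n-b_n+1}m^{b_n-1}<2k_n$, so Markov's inequality yields $\P(Z_n\le 4k_n)\ge\tfrac12\P(E_n)$; and since $b_n=\lceil\log(2k_n(k^*)^{-n})/\log(m/k^*)\rceil$ (note that $m>k^*$ is forced, as $m=k^*$ would make the hypotheses $k_n\ge (k^*)^n$ and $k_n=o(m^n)$ incompatible), replacing $k_n$ by $\lfloor k_n/4\rfloor$ shifts $b_n$ by at most an additive constant, which is absorbed into $B_1$. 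The upper bound, however, is not proved. Your recursion $u_n(k_n)\le u_{n-1}(k_n-(k^*-1)(k^*)^{n-1})^{k^*}$ is valid, but after iterating down to generation $b_n$ the residual threshold is $\tilde k=k_n-(k^*)^n+(k^*)^{b_n}$, which need not be small relative to $m^{b_n}$: for instance $k_n=2(k^*)^n$ gives $b_n=O(1)$ and $\tilde k\sim(k^*)^n\gg m^{b_n}$, so $u_{b_n}(\tilde k)\to1$ and the iterated bound degenerates. The defect is that the crude step grants every subtree the entire excess budget $k_n-(k^*)^n$ instead of forcing the subtrees to share it. You correctly name the remedy (the Chernoff bound $e^{\theta k_n}f^{(n)}(e^{-\theta})$ combined with Böttcher-type asymptotics for the iterated generating function as $s\to0$), but you do not carry it out and explicitly defer to \cite{FW06}. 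Since the authors themselves import the lemma wholesale from that reference, this is a defensible place to stop; but as a standalone proof, the upper half remains a genuine gap.
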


\subsection{Branching random walk}
	
	We now define the branching random walk which is a discrete time stochastic process that evolves in the following way.
	At time $n=0$ one particle is placed at the origin of the real line. 
	This particle will start a population which will be described by the branching process $Z = (Z_n)_{n\geq 0}$. At time $n=1$ the initial particle splits into $Z_1$ new particles which 
	move independently of each other and $Z_1$. We assume that
	each displacement from the place of birth is an independent copy of $X$. Each particle evolves according to these rules independently of all others. More precisely at time $n=2$, 
	each particle, independently of the others, splits into a random number of particles distributed according to $p$. The total number of particles present in the system at time $n=2$ 
	is denoted by $Z_2$. Each particle performs, independently of other particles and $Z_1, Z_2$, a step distributed as $X$. The system continues to evolve according to these rules.  
	Let $\mathcal{T} = (V, E)$ be the associated Galton-Watson tree with the initial particle denoted as the root $o \in V$. 
	Let $D_n \subset V$ denote the 
	set of particles present at time $n$. Clearly $|D_n | = Z_n$. 
	For $v, w \in V$ write $[v,w]$ for the set of edges along the unique line from $v$ to $w$. To model the displacements, assume that each edge of the tree $\mathcal{T}$ is labeled 
	with an independent copy of $X$, that is we are given  
a collection	$\{ X_e\}_{e\in E}$ of iid random variables distributed as $X$. Then the position of the particle $v$ is equal to
	\begin{equation*}
		S_v = \sum_{e \in [o,v]} X_e
	\end{equation*} 
	and the position of the rightmost particle at time $n$ is
	\begin{equation*}
		M_n = \max_{v \in D_n}S_v.
	\end{equation*}
 	It is well known, that if \eqref{eq:cramer} is satisfied and the step size distribution has mean $0$, then $M_n$ has a linear speed, that is $n^{-1}M_n$ converges to a strictly positive constant a.s. (see \cite{B76, H74, K75}). 
	In this case large deviations for $M_n$ were already considered in a number of papers, see \cite{BM17, CH17, CH18, GH18, LP15, LT17, R93}. We refer to~\cite{S15} for a more general model and many limit laws and concentration properties.

	In our case, as verified in~\cite{G00}, under Assumptions~\ref{assumption_RW} and \ref{as:BP}, $M_n$ grows with superlinear speed. 
	\begin{lemma}
		Let Assumptions~\ref{assumption_RW} and \ref{as:BP} be in force. Then 
		\begin{equation}\label{linspeed}
			\limn \frac{M_n}{n^{1/r}} = \alpha:= \Bigl( \frac{\log m}{\lambda_+} \Bigr)^{1/r} \quad \P^*\text{-a.s.}
		\end{equation}
	\end{lemma}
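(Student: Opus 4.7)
The plan is to establish the matching bounds $\limsup_n M_n/n^{1/r} \leq \alpha$ and $\liminf_n M_n/n^{1/r} \geq \alpha$ separately, letting a small $\varepsilon > 0$ tend to zero at the end.

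\emph{Upper bound.} Markov's inequality combined with the many-to-one identity (every edge of the tree carries an independent copy of $X$) gives
\[
\P\bigl(M_n \geq (\alpha+\varepsilon) n^{1/r}\bigr) \leq \E\bigl[\#\{v \in D_n : S_v \geq (\alpha+\varepsilon) n^{1/r}\}\bigr] = m^n \P\bigl(S_n \geq (\alpha+\varepsilon) n^{1/r}\bigr).
\]
Lemma~\ref{lem:LDP_RW} bounds the right-hand side by $\exp\bigl(n[\log m - \lambda_+(\alpha+\varepsilon)^r + o(1)]\bigr)$. Since $\lambda_+ \alpha^r = \log m$ and $(\alpha+\varepsilon)^r > \alpha^r$, this is summable in $n$; Borel-Cantelli and $\varepsilon \downarrow 0$ along a countable sequence yield the upper bound.

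\emph{Lower bound.} Fix $\varepsilon > 0$ and set $\delta = \varepsilon/3$. I would split the BRW at a small intermediate time $k_n = \lfloor c_\varepsilon n \rfloor$, where $c_\varepsilon > 0$ is taken so small that both $c_\varepsilon \log m < \lambda_- \delta^r$ and $(1 - c_\varepsilon) \alpha^r > (\alpha - 2\varepsilon/3)^r$ (both achievable simultaneously for small $\varepsilon$ because $r<1$). On the event of survival, three ingredients then combine. (a)~By Lemma~\ref{lem:LDBP_S} or Lemma~\ref{lem:LDBP_B}, $\P^*\bigl(Z_{k_n} < m^{k_n}/C\bigr)$ decays at least exponentially in $n$ for a suitable constant $C$. (b)~By many-to-one and Lemma~\ref{lem:LDP_RW}, $\E\bigl[\#\{u \in D_{k_n}: S_u < -\delta n^{1/r}\}\bigr] = m^{k_n} \P\bigl(S_{k_n} < -\delta n^{1/r}\bigr) \leq \exp\bigl(n[c_\varepsilon \log m - \lambda_- \delta^r]\bigr)$ tends to zero, so with overwhelming probability every $u \in D_{k_n}$ is ``well placed'' in the sense that $S_u \geq -\delta n^{1/r}$. (c)~Conditional on $\mathcal{F}_{k_n}$, the $Z_{k_n}$ iid subtrees of depth $n - k_n$ each produce a particle $v$ with subtree-relative position $\tilde S_v \geq (\alpha - 2\varepsilon/3) n^{1/r}$ with some probability $p>0$, by a Paley-Zygmund estimate inside the subtree whose first moment $m^{n-k_n} \P\bigl(S_{n-k_n} \geq (\alpha - 2\varepsilon/3) n^{1/r}\bigr)$ grows exponentially by the choice of $c_\varepsilon$. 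With $Z_{k_n}$ iid trials, the probability of no success is doubly exponentially small. On the intersection of (a), (b), (c), some well-placed $u$ has a successful subtree, producing a particle at time $n$ with position $\geq -\delta n^{1/r} + (\alpha - 2\varepsilon/3) n^{1/r} = (\alpha - \varepsilon) n^{1/r}$. All failure probabilities are summable in $n$; Borel-Cantelli and $\varepsilon \downarrow 0$ complete the argument.

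\emph{Main obstacle.} The crux is step (c), the Paley-Zygmund estimate within a single subtree. The first moment follows directly from many-to-one; the second moment requires summing pair-contributions indexed by the depth of the most recent common ancestor and checking that the total remains of order $(\E[N^{\mathrm{sub}}])^2$ up to at most polynomial factors, so that $\P(N^{\mathrm{sub}} \geq 1)$ is at least a reciprocal polynomial in $n$. Coordinating this with the position control in (b)---which forces the splitting time $k_n$ to be linear in $n$ with a small but positive slope, rather than sublinear---is the delicate part of the proof.
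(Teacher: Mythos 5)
The paper does not prove this lemma itself; it is quoted from \cite{G00}. Your first-moment/Borel--Cantelli upper bound is correct and standard. The lower bound, however, has a genuine gap in step (c), and a smaller error in step (a).

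The main problem is that Paley--Zygmund applied to the plain count $N^{\mathrm{sub}}=\#\{v: \tilde S_v\geq (\alpha-2\varepsilon/3)n^{1/r}\}$ does \emph{not} give a success probability that is reciprocal-polynomial in $n$, and the ``delicate part'' you flag cannot be checked. Under Assumption \ref{assumption_RW} the event $\{S_{n}\geq L\}$ with $L=yn^{1/r}$ is realized by a single big jump, cf.\ \eqref{sumasmax}: $\P(S_n\geq L)\sim n\,\P(X\geq L)$. Consequently the dominant contribution to $\E[(N^{\mathrm{sub}})^2]$ comes from pairs of particles whose common ancestor already carries the one big jump: the term indexed by a most recent common ancestor at depth $j$ contains $\E\bigl[\P(S_{n'}\geq L-S_j)^2\bigr]\gtrsim \P(S_j\geq L)\cdot\tfrac14\approx j\,\P(X\geq L)$, and summing over $j$ gives $\E[(N^{\mathrm{sub}})^2]\gtrsim m^{2(n-k_n)}\P(X\geq L)$, whereas $(\E[N^{\mathrm{sub}}])^2\approx m^{2(n-k_n)}n^2\P(X\geq L)^2$. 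The ratio $(\E N)^2/\E[N^2]$ is therefore of order $n^2\P(X\geq L)=\exp(-\lambda_+(\alpha-2\varepsilon/3)^r n+o(n))$, i.e.\ exponentially small with rate close to $\log m$. Since step (b) forces $c_\varepsilon$ to be small, the $m^{c_\varepsilon n}$ independent subtrees cannot compensate a per-subtree success probability of order $e^{-(\log m-o_\varepsilon(1))n}$, so the ``doubly exponentially small'' failure bound does not follow. (The second-moment computation would moreover require $\E[Z_1^2]<\infty$, which Assumption \ref{as:BP} does not grant; that part is fixable by truncating the offspring law.) Separately, in step (a) the claim that $\P^*(Z_{k_n}<m^{k_n}/C)$ decays exponentially for fixed $C$ is false: this probability converges to $\P^*(W\leq 1/C)$, which is positive in the Schr\"oder case; you must either lower the threshold to $e^{\theta k_n\log m}$ with $\theta<1$ (where Lemmas \ref{lem:LDBP_S} and \ref{lem:LDBP_B} apply) or invoke Theorem \ref{Lemma_GWP_martingale} for an almost-sure eventual bound.

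The repair is to abandon the second-moment method and exploit conditional independence of the last-generation displacements, which is how \cite{G00} and the paper's own lower bound in Case (1) of Theorem \ref{theorem_BRW} proceed. On survival, $Z_{n-1}\geq \tfrac{W}{2}m^{n-1}$ eventually a.s.\ by Theorem \ref{Lemma_GWP_martingale}; a first-moment bound shows that, except on a summable event, all but a vanishing fraction of $D_{n-1}$ lie above $-\delta n^{1/r}$; and conditionally on generation $n-1$ the displacements of one designated child of each such particle are i.i.d.\ copies of $X$, each exceeding $(\alpha-\varepsilon+\delta)n^{1/r}$ with probability $e^{-\lambda_+(\alpha-\varepsilon+\delta)^r n+o(n)}\gg m^{-n}$. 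The conditional probability that none succeeds is then $\exp(-e^{cn})$, which is summable, and Borel--Cantelli finishes the lower bound without any splitting time or Paley--Zygmund estimate.
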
	
	
	A more detailed description of the asymptotic behaviour of $M_n$, giving also the second term, i.e.~the order of $M_n -n^{1/r}( \frac{\log m}{\lambda_+} )^{1/r}$, was recently delivered in~\cite{DGH20}. 
	Our main results stated in the next section concern large deviations related to~\eqref{linspeed}.  Clearly, $M_n$ is a maximum of random walks which are pairwise dependent. 
	In some aspects however, the asymptotic behaviour of $M_n$ is similar to that of a 
	maximum of (a random number of) independent random walks. We will use the latter as a reference model. Consider a collection $(X^{(j)}_k)_{j,k\geq 1}$ of iid random variables distributed as $X$ and put 
	$S_0^{(j)}=0$ and 
	\begin{equation}\label{defcopy}
	S_n^{(j)}=\sum_{k=1}^nX_k^{(j)}\, .
	\end{equation}
	 We then define the maximum of (a random number of) independent random walks by
	\begin{equation}\label{eq:Mind}
		\tilde{M}_n = \max_{1\leq j\leq Z_n}S_n^{(j)}.	
	\end{equation}
	Since $\tilde{M}_n$ involves independent random variables it stochastically dominates $M_n$ which involves dependent random walks, see Lemma \ref{lemma_BRW_vs_ind_RW2}. As a consequence, we can use the deviations of $\tilde{M}_n$ as a bound for the deviations of $M_n$. For completeness, we also establish large deviations for the laws of $n^{-1/r}\tilde{M}_n$. 

\section{Main results}\label{sec:main}
Recall \eqref{linspeed} and let, for $x < \alpha$,
\begin{equation} \label{def_H}
		H(x) = 
		\left\{ \begin{array}{cc} k^*\lambda_- (\alpha-x)^r & \mbox{if } p(0)+p(1)=0 \\
		  \min\left\{ \rho + \lambda_-(-x)^r, \lambda_- (\alpha-x)^r \right\}  & \mbox{if } x \leq 0,\: p(0)+p(1)>0\\
		   \min \left\{ \lambda_- \alpha^r,  \rho  \right\}\cdot \left(1 - \left( \frac x\alpha\right)^r \right) & \mbox{if } x \geq 0, \:p(0)+p(1)>0.
				\end{array} \right.
	\end{equation}

	Define the rate function of the branching random walk as
	\begin{equation} \label{def_I_BRW}
		I^{\text{BRW}}(x)=
		\begin{cases}
		\lambda_+ x^r - \log m & \text{for } x \geq \alpha,\\
		H(x) & \text{for } x < \alpha.
		\end{cases}
	\end{equation}
Our main result is the following.
	\begin{theorem}\label{theorem_BRW} [Branching random walk]\\
		Suppose that Assumptions \ref{assumption_RW}  and \ref{as:BP} are satisfied. Then, the laws of $ \frac{M_n}{n^{1/r}}$ under $\P^*$ satisfy a large deviation principle 
		with rate function $I^{\textnormal{BRW}}$.
	\end{theorem}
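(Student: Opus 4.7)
The plan is to establish the LDP via matching upper and lower bounds on $\frac{1}{n}\log \P^*(M_n/n^{1/r} \in B)$, handling upper deviations $\{M_n \geq xn^{1/r}\}$ with $x > \alpha$ and lower deviations $\{M_n \leq xn^{1/r}\}$ with $x < \alpha$ separately, since their mechanisms differ substantially.

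\emph{Upper deviations.} For $x > \alpha$ the rate $\lambda_+ x^r - \log m$ should arise from one of the $\sim m^n$ typical particles taking a single anomalously large walk. For the upper bound I would use the stochastic domination $M_n \preceq \tilde M_n$ (to be proved as a separate lemma; see the discussion surrounding \eqref{eq:Mind}), the first-moment inequality $\P(\tilde M_n \geq xn^{1/r}) \leq \E[Z_n]\,\P(S_n \geq xn^{1/r}) = m^n\,\P(S_n \geq xn^{1/r})$, and Lemma~\ref{lem:LDP_RW}. For the matching lower bound I would invoke the Kesten--Stigum theorem (Assumption~\ref{as:BP}) to guarantee $Z_n \gtrsim m^n$ on a positive-probability event, and then a second-moment / Paley--Zygmund argument exploiting conditional independence of walks after a typical initial portion of the tree to produce at least one descendant whose walk exceeds $xn^{1/r}$.

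\emph{Lower deviations.} Each scenario appearing in $H(x)$ should be realised explicitly for the lower bound. In the B\"ottcher case, condition on $Z_1 = k^*$ and on each of the $k^*$ initial steps lying below $-(\alpha - x + \varepsilon)n^{1/r}$: the cost $k^* \lambda_-(\alpha-x)^r$ follows from Lemma~\ref{lem:LDP_RW}, and each subtree then reaches its typical max $\sim \alpha n^{1/r}$ above its starting point, yielding $M_n \leq xn^{1/r}$. In the Schr\"oder case the competing scenarios are a bottleneck (force $Z_n \leq e^{\lambda_+ x^r n}$ at cost $\rho(1-(x/\alpha)^r)$ via Lemma~\ref{lem:LDBP_S}, so the typical max of so few walks is $\leq xn^{1/r}$) and a downshift-type scenario combining a single-line segment with a large negative ancestral step that realises the $\lambda_-\alpha^r$-type contribution. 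For the matching upper bound in the B\"ottcher case, I would use that each of the $Z_1 \geq k^*$ subtrees has typical max $\sim \alpha n^{1/r}$ above its starting point (by \eqref{linspeed}), forcing each of the $\geq k^*$ initial steps to be $\leq -(\alpha - x - o(1))n^{1/r}$. For the Schr\"oder case, I would decompose $\{M_n \leq xn^{1/r}\}$ according to $Z_n$, applying Lemma~\ref{lem:LDBP_S} on the small-tree event and handling the complementary event via the first-generation structure.

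The main obstacle I anticipate is the Schr\"oder upper bound in the regime $\rho > \lambda_-\alpha^r$, where $H(x) = \lambda_-\alpha^r(1-(x/\alpha)^r)$ is strictly smaller than both the pure bottleneck rate $\rho(1-(x/\alpha)^r)$ and the pure single-jump rate $\lambda_-(\alpha-x)^r$. Thus neither pure scenario by itself gives the sharp rate, and one must identify the correct mixed or iterative configuration that achieves it and verify the matching upper bound. The stretched-exponential tails ensure that a single large step dominates any random-walk deviation, which constrains the search space of relevant configurations but still leaves a delicate variational optimisation. The case $x \leq 0$ is comparatively cleaner since $H(x) = \min\{\rho + \lambda_-(-x)^r,\,\lambda_-(\alpha-x)^r\}$ corresponds directly to the bottleneck-then-downshift and pure-downshift scenarios, each of which is pure and therefore analogous to the B\"ottcher analysis.
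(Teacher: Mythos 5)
Your architecture matches the paper's at the top level: upper deviations via stochastic domination by $\tilde M_n$ plus a first-moment bound, with a Kesten--Stigum/second-moment lower bound; lower deviations via explicit scenarios whose costs combine a Galton--Watson bottleneck with a single macroscopic negative step. The upper-deviation case and the B\"ottcher case are essentially complete modulo quantification (for the B\"ottcher upper bound the paper makes ``each subtree reaches its typical maximum'' quantitative by descending to generation $4\log n$, where there are at least $n^2$ particles, so that $\P^*\bigl(M_{n-1-4\log n}/n^{1/r}\le\alpha-\varepsilon\bigr)^{n^2}$ decays superexponentially; your sketch needs this or something equivalent, since the a.s.\ statement \eqref{linspeed} alone gives no rate).

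The genuine gap is exactly the one you flag and leave open: the Schr\"oder case for $0\le x<\alpha$ when $\rho>\lambda_-\alpha^r$. The resolution is the variational form of $H$ in Remark~\ref{Hvardes}, $H(x)=\inf_{t\in[0,\gamma]}\bigl\{t\rho+\lambda_-\bigl((1-t)^{1/r}\alpha-x\bigr)^r\bigr\}$ with $\gamma=1-(x/\alpha)^r$: the optimal configuration keeps the population at $k^*$ until time $tn$ (cost $t\rho$ by Lemma~\ref{lem:LDBP_S}), then one particle produces a child with a single negative displacement of order $\bigl((1-t)^{1/r}\alpha-x\bigr)n^{1/r}$ while its $k^*-1$ siblings have no offspring (possible since in the Schr\"oder case $k^*=1$ or $p(0)>0$), and the surviving subtree then runs typically for time $(1-t)n$, climbing $(1-t)^{1/r}\alpha n^{1/r}$ above the jump. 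The infimum sits at an interior $t$ exactly in the regime $\rho>\lambda_-\alpha^r$, $x>0$, which is why neither of your two ``pure'' scenarios is sharp there. More importantly, your proposed upper-bound decomposition ``according to $Z_n$'' cannot recover this rate, because the relevant parameter is not the terminal population size but the \emph{time} at which the bottleneck ends. The paper instead conditions on the stopping time $T_n=\inf\{t\ge 0: Z_{tn}\ge n^3\}$, discretizes $t$ over a grid of mesh $\varepsilon_1$, bounds $\P^*(T_n\in(t-\varepsilon_1,t])\le\P^*(Z_{(t-\varepsilon_1)n}\le n^3)$ via Lemma~\ref{lem:LDBP_S}, and on each such event isolates one particle alive at time $tn$ whose displacement plus subtree maximum must fall below $xn^{1/r}$, yielding the single-jump term; a separate estimate shows the subtree cannot die back below $n^2$ particles except at superexponential cost. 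Without a time-indexed decomposition of this kind the contributions $t\rho$ and $\lambda_-\bigl((1-t)^{1/r}\alpha-x\bigr)^r$ cannot be separated, so the proposal does not close.
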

\begin{remark}\label{Hvardes}	
In the proof of Theorem \ref{theorem_BRW}, we will use the following variational form of $H$.
 For $x < \alpha$ let
\begin{equation} \label{gammadef}
\gamma =\gamma(x) = 
		\left\{ \begin{array}{cc} 1 & \mbox{if } x\leq 0\\
		1- \bigl(\frac{x}{\alpha}\bigr)^r  & \mbox{if } x > 0.
				\end{array} \right.
	\end{equation}
Then	
		\begin{equation} \label{def_Hvar}
		H(x) = 
		\left\{ \begin{array}{cc} k^*\lambda_- (\alpha-x)^r & \mbox{if } p(0)+p(1)=0,\\
		\inf_{t \in [0,\gamma]} \Bigl\{ t \rho + \lambda_- \bigl((1-t)^{1/r}\alpha-x \bigr)^r  \Bigr\} & \mbox{if } p(0)+p(1)>0.
				\end{array} \right.
	\end{equation}	
One derives \eqref{def_H} from \eqref{def_Hvar} by a straightforward calculation.
\end{remark}

\begin{remark}\label{moregeneral}
For the upper large deviations, \eqref{lowtail} is not needed: if \eqref{upptail} holds and if
$\E[ X ] =0$ and $\E[ X^2 ] < \infty$,
then for $x \geq \alpha$, 
\begin{equation}
\lim_{n \to \infty} \P^* \Bigl(\frac{M_n}{n^{1/r}} \geq x \Bigr)= - \left(\lambda_+ x^r - \log m\right) \, .
\end{equation}
If \eqref{lowtail} is not satisfied, the asymptotics of
$\P^* \Bigl(\frac{M_n}{n^{1/r}} \leq x \Bigr)$
for $x < \alpha$ may change but our techniques of proof still apply.
\end{remark}
We now state the results for independent walks and then give some intuition for the rate functions.	
	Define the rate function for the maximum of 
	independent random walks in the Schröder case as
	\begin{equation} \label{def_I_ind_S}
		I^{\text{ind}}(x)=
		\begin{cases}
		\lambda_+ x^r - \log m & \text{for } x \geq \alpha,\\
		\rho \bigl(1 - \tfrac{\lambda_+ x^r}{\log m} \bigr)  &  \text{for } 0 \leq x \leq \alpha,\\
		k^* \lambda_- (-x)^r + \rho & \text{for } x \leq 0.
		\end{cases}
	\end{equation}
	Note that $\rho \bigl(1 - \tfrac{\lambda_+ x^r}{\log m} \bigr)= I^{\text{GW}}(I(x))$ for $ 0 \leq x < \alpha$, see \eqref{def_I} and \eqref{def_I_GW_S}.

	\begin{theorem}\label{theorem_ind_RW_Schröder}[Independent random walks, Schröder case]\\
		Suppose that Assumptions \ref{assumption_RW} and \ref{as:BP} are satisfied and that 
		$p(0)+p(1)>0$. Then, the laws of $\frac{\tilde{M}_n}{n^{1/r}}$ under $\P^*$ satisfy a large deviation principle with rate function $I^{\text{ind}}$.
	\end{theorem}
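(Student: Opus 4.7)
The plan is to establish the LDP by proving sharp logarithmic asymptotics for the one-sided tails $\P^*(\tilde M_n \geq xn^{1/r})$ with $x > \alpha$ and $\P^*(\tilde M_n \leq xn^{1/r})$ with $x < \alpha$. Since $I^{\text{ind}}$ is continuous on $\R$, vanishes exactly at $\alpha$, is monotone on each of $(-\infty,\alpha]$ and $[\alpha,\infty)$ and has compact level sets, these half-line estimates will yield the full LDP via the standard argument. I treat three regimes separately, each corresponding to a different ``cheapest'' way to realize the event.

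For the upper tail $x > \alpha$, conditioning on $Z_n$ gives $\P(\tilde M_n \geq xn^{1/r} \mid Z_n) = 1 - (1-p_n)^{Z_n}$ with $p_n := \P(S_n \geq xn^{1/r}) = e^{-\lambda_+ x^r n + o(n)}$ by Lemma \ref{lem:LDP_RW}. A union bound with $\E[Z_n] = m^n$ gives an upper bound on the probability of rate $\lambda_+ x^r - \log m$. For the matching lower bound I restrict, via Kesten-Stigum (Assumption \ref{as:BP}), to the positive $\P^*$-probability event $\{Z_n \geq \tfrac{1}{2}m^n\}$, on which $Z_n p_n \to 0$, and apply the elementary inequality $1 - (1-p)^k \geq \tfrac{1}{2}kp$ valid when $kp \leq 1$.

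For the intermediate regime $0 \leq x < \alpha$, set $q_n := \P(S_n \geq xn^{1/r})$ so that $\P^*(\tilde M_n \leq xn^{1/r}) = \E^*[(1-q_n)^{Z_n}]$. The heuristic is that the cheapest realization forces $Z_n$ down to the critical size $e^{\lambda_+ x^r n}$: below this threshold the rare event for the walks is automatic, above it the sheer number of independent walks swamps any rare event. For the upper bound on the probability, I fix $\epsilon > 0$ and split according to $\{Z_n \leq e^{(\lambda_+ x^r + \epsilon)n}\}$: Lemma \ref{lem:LDBP_S} supplies the bound $e^{-\rho(1 - (\lambda_+ x^r + \epsilon)/\log m)n + o(n)}$ on this event, while on its complement $(1-q_n)^{Z_n} \leq \exp(-Z_n q_n)$ is doubly exponentially small. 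For the lower bound I use
$$\P^*(\tilde M_n \leq xn^{1/r}) \geq \P^*(Z_n \leq k_n)(1-q_n)^{k_n}$$
with $k_n = \lfloor e^{(\lambda_+ x^r - \epsilon)n}\rfloor$ so that $k_n q_n \to 0$, invoke Lemma \ref{lem:LDBP_S}, and let $\epsilon \to 0$.

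For the lower tail $x < 0$ we have $\P(\tilde M_n \leq xn^{1/r} \mid Z_n = k) = \P(S_n \leq xn^{1/r})^k$, which is monotone decreasing in $k$, so the cheapest strategy is to hold $Z_n$ at its smallest accessible value $k^*$. The bound
$$\P^*(\tilde M_n \leq xn^{1/r}) \geq \P^*(Z_n = k^*) \cdot \P(S_n \leq xn^{1/r})^{k^*}$$
together with Lemmas \ref{lem:LDBP_S} and \ref{lem:LDP_RW} delivers the lower bound of rate $\rho + k^* \lambda_-(-x)^r$. For the matching upper bound I split $\sum_k \P^*(Z_n = k)\P(S_n \leq xn^{1/r})^k$ at a threshold $K = K(x)$ chosen so that $(K+1)\lambda_-(-x)^r > \rho + k^* \lambda_-(-x)^r$: the head is controlled by $\P^*(Z_n \leq K)\P(S_n \leq xn^{1/r})^{k^*}$, and the tail by $\P(S_n \leq xn^{1/r})^{K+1}$. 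The main obstacle throughout is the intermediate regime, where the LDP speed arises from a delicate balance between the lower deviations of $Z_n$ (Lemma \ref{lem:LDBP_S}) and the upper deviations of the individual walks (Lemma \ref{lem:LDP_RW}), and one must verify that the slowly varying prefactors in Assumption \ref{assumption_RW} and in the $Z_n$ asymptotics contribute only $o(n)$ to the logarithm.
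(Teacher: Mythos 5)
Your proposal is correct and follows essentially the same route as the paper's proof: Markov/union bound plus a Kesten--Stigum restriction to $\{Z_n\geq \tfrac12 m^n\}$ for $x>\alpha$, a split of $\E^*[(1-q_n)^{Z_n}]$ at the critical population size $e^{(I(x)\pm\varepsilon)n}$ combined with the Galton--Watson lower-deviation asymptotics for $0\leq x<\alpha$, and a truncation of $\sum_k \P^*(Z_n=k)\P(S_n\leq xn^{1/r})^k$ at a large fixed $K$ together with the strategy $Z_n=k^*$ for $x<0$. The only cosmetic differences are that the paper handles $x=0$ by continuity of the rate function and lets $K\to\infty$ rather than fixing it in advance.
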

We now turn to the deviations of $\tilde{M}_n$ in the Böttcher case.
In the Böttcher case ($p(0)+p(1)=0$) we have $\rho=\infty$ and therefore $I^{\text{ind}}(x)=\infty$ for all $x<\alpha$. Hence, in this case the lower deviation probabilities 
	$\P^*( \tilde{M}_n \leq xn)$ for $x<\alpha$ decay faster than exponentially in $n$.
	
\begin{theorem}\label{theorem_ind_RW_B} [Independent random walks, Böttcher case]\\
	Suppose that Assumptions \ref{assumption_RW} and \ref{as:BP} are satisfied and that $p(0)+p(1)=0$. Then,
	\begin{align*}
		\log \P(\tilde{M}_n \geq xn^{1/r}) &\sim -(\lambda_+x^r-\log m)n & \mbox{for }x \geq \alpha, \\
		\log | \log \P(\tilde{M}_n \leq xn^{1/r})| &\sim - n \log k^* \left(1  - \frac{\lambda_+x^r}{\log m}\right) & \mbox{for } x \in [ 0, \alpha), \\
		\log \P(\tilde{M}_n \leq xn^{1/r}) &\sim -n(k^*)^n\lambda_- |x|^r& \mbox{for } x < 0.
	\end{align*}
\end{theorem}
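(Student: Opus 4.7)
The plan is to exploit the conditional independence of the $Z_n$ walks given the tree structure, which yields the master identity
\[ \P(\tilde M_n \le y n^{1/r}) = \E\!\left[\P(S_n \le y n^{1/r})^{Z_n}\right] \]
for any real $y$. Combined with the LDP for $S_n$ (Lemma~\ref{lem:LDP_RW}) one only has to control the (random) exponent $Z_n$. In the Böttcher case this is done through the Kesten--Stigum theorem (under Assumption~\ref{as:BP}), the deterministic lower bound $Z_n \ge (k^*)^n$, and the lower-deviation estimate of Lemma~\ref{lem:LDBP_B}. I would split the proof into the three regimes $x \ge \alpha$, $x < 0$, and $0 \le x < \alpha$ appearing in the statement.

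For the upper tail $x \ge \alpha$, the upper bound is a first-moment estimate: $\P(\tilde M_n \ge xn^{1/r}) \le \E[Z_n]\,\P(S_n \ge xn^{1/r}) = m^n e^{-\lambda_+ x^r n(1+o(1))}$. The matching lower bound comes from restricting to $\{Z_n \ge \delta m^n\}$, which has $\P^*$-probability bounded away from $0$ by Kesten--Stigum, together with the elementary inequality $1-(1-p)^N \ge \tfrac12 \min(1,Np)$ (noting $m^n p_n \to 0$ for $x>\alpha$). For the extreme lower tail $x<0$, Lemma~\ref{lem:LDP_RW} gives $\P(S_n \le xn^{1/r}) = \exp(-\lambda_-|x|^r n(1+o(1)))$, so the master identity together with $Z_n \ge (k^*)^n$ yields
\[ \P(\tilde M_n \le xn^{1/r}) \le \exp\!\left(-n(k^*)^n \lambda_-|x|^r(1+o(1))\right); \]
the matching lower bound follows by conditioning on $\{Z_n = (k^*)^n\}$, whose probability is only $\exp(-C(k^*)^n)$ and is therefore absorbed by the $n(k^*)^n$ rate.

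The intermediate regime $0 \le x < \alpha$ is the main difficulty. Setting $p_n = \P(S_n > xn^{1/r}) = e^{-\lambda_+ x^r n(1+o(1))}$, I would start from the upper bound
\[ \P(\tilde M_n \le xn^{1/r}) \le \P(Z_n \le k_n) + (1-p_n)^{k_n}, \]
choose $k_n = \exp(un)$, and apply Lemma~\ref{lem:LDBP_B}, which gives that $-\log\P(Z_n \le k_n)$ is of order $(k^*)^{n-b_n}$ with $b_n/n \to (u-\log k^*)/(\log m-\log k^*)$, while $(1-p_n)^{k_n} \le \exp(-k_n p_n/2)$. Equating the exponents of both doubly exponentially small terms yields the optimal $u = \log k^* + \lambda_+ x^r(\log m-\log k^*)/\log m$, and the common value of the exponent equals $n\log k^*\,(1-\lambda_+ x^r/\log m)$. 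For the matching lower bound I would restrict to $\{Z_n \le k_n\}$ for the same optimal $k_n$, combine the lower-bound half of Lemma~\ref{lem:LDBP_B} with $(1-p_n)^{k_n} \ge \exp(-2k_n p_n)$, and optimise. The principal obstacle is that Lemma~\ref{lem:LDBP_B} only pins down $(k^*)^{b_n-n}\log\P(Z_n\le k_n)$ between two unspecified constants $B_1,B_2$; this is precisely why the theorem only claims an asymptotic on $\log|\log \P|$ rather than on $\log\P$ itself, since the extra logarithm absorbs the unknown prefactor.
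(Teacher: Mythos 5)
Your proposal matches the paper's proof in all essentials: the same master identity $\E[\P(S_n\le yn^{1/r})^{Z_n}]$, the first-moment bound plus Kesten--Stigum for $x\ge\alpha$, the deterministic bound $Z_n\ge (k^*)^n$ together with the cost $\log\P(Z_n=(k^*)^n)=O((k^*)^n)=o(n(k^*)^n)$ for $x<0$, and for $0\le x<\alpha$ the split $\P(Z_n\le k_n)+(1-p_n)^{k_n}$ with Lemma~\ref{lem:LDBP_B}; your optimal $u$ is exactly the paper's choice of $\log a$, and your observation that the unknown constants $B_1,B_2$ are absorbed by the outer logarithm is precisely the point. No substantive differences.
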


Let us now give some intuition for the rate function $I^{\text{ind}}$ and describe the large deviation events leading to $\{\tilde{M}_n \geq xn^{1/r}\}$ and $\{M_n \geq xn^{1/r}\}$ for some $x>\alpha$, respectively 
	$\{\tilde{M}_n \leq xn^{1/r}\}$ and $\{M_n \leq xn^{1/r}\}$ for some $x<\alpha$.
	
For $x>\alpha$, the number of particles should be larger or equal than expected, i.e.~$Z_n \geq e^{nt}$ for some $t \geq \log m$. It turns out that $t=\log m$ is the optimal value: it is too expensive to produce $e^{nt}$ particles for some $t > \log m$.
If there are $e^{nt}$ particles at time $n$, the probability that at least one particle reaches $xn^{1/r}$ is of order $\exp(-I(x)n+tn+o(n))$ for 
	$t < I(x)$. Therefore,  the rate function is $I(x) - \log m$.
	This argument is the same for the independent walks and for the branching random walk.
Note that for $x > \alpha$ the rate of decay of $\log \P(\tilde{M}_n \geq xn^{1/r})$  in the Schröder case and the Böttcher case coincide, and concide with the rate function for branching random walk.\\
	Consider now independent walks in the Schröder case.
	If $0 \leq x< \alpha$, the probability that one particle reaches $xn^{1/r}$ is of order $\exp(-I(x)n+o(n))$. Hence, for every $\varepsilon>0$, if there are less than $e^{(I(x)- \varepsilon) n}$ 
	particles, the probability that none of these particles reaches $xn^{1/r}$ is close to 1. However, if there are more than $e^{(I(x)+ \varepsilon) n}$ particles, this probability decays 
	exponentially in $n$.
	If $x<0$, already the probability that a single particle is below $xn^{1/r}$ at time $n$ decays exponentially fast in $n$. Hence, if the number of particles $Z_n$ grows exponentially, the 
	probability that all particles are below $xn^{1/r}$ at time $n$ decays faster than exponentially. Therefore, the number of particles needs to grow subexponentially. Since $\rho$ does not 
	depend on the choice of $k$ in Lemma~\ref{lem:LDBP_S}, the optimal strategy for $\{\tilde{M}_n \leq xn^{1/r}\}$ is to  have only $k^*$ particles at time $n$ with all their positions being below $ xn^{1/r}$. 

For the branching random walk, in the Schröder case the strategy for 
	$x < \alpha$ goes as follows. At time $tn$ there are only $k^*$ particles, and 
	one of them has an offspring whose displacement is negative with a large absolute value, while all other $k^*-1$ particles have no offspring. Note that in the Schröder case either $k^*=1$ or $p(0) > 0$. Afterwards, all particles move and reproduce as usual.\\
Further notice, that in contrast to the case of independent random walks, the number of particles can also grow exponentially if $x<0$ in the Schröder case. It suffices to have a
small number of particles at time $tn$ for some $t \in [0,1]$.\\
To compare the rate functions in the Schröder case, note that the maximum of independent random walks stochastically dominates the maximum of the branching random walk, see 
Lemma~\ref{lemma_BRW_vs_ind_RW2}. Therefore, $I^{\text{ind}}(x) \leq I^{\text{BRW}}(x)$ for $x>\alpha$, respectively $I^{\text{ind}}(x) \geq I^{\text{BRW}}(x)$ for 
	$x<\alpha$. For $x<\alpha$, the inequality is in general strict. For $x>\alpha$, the rate functions coincide, see the argument above.

\section{Proofs}\label{sec:proofs}
\subsection{Auxiliary results}

	We often need to estimate the number of particles at time $n$, which has expectation $m^n$. Let $W_n = \frac{Z_n}{m^n}$ and $(\mathcal{F}_n)_{n \in \N}$ be the natural 
	filtration of the Galton-Watson process, i.e.~$\mathcal{F}_n= \sigma(Z_1, \ldots, Z_n)$. 
	Then $(W_n)_{n \in \N}$ forms a non-negative martingale with respect to the filtration $(\mathcal{F}_n)_{n \in \N}$. Therefore, $W_n \to W$ almost surely, where $W$ is an almost surely 
	finite random variable. The following well-known theorem shows that under our assumptions, the limit $W$ is non-trivial, i.e.~$\P^*(W>0)=1$. 
	\begin{theorem}[Kesten-Stigum Theorem] \label{Lemma_GWP_martingale}
		If Assumption \ref{as:BP} is satisfied, we have 
		\begin{equation*}
			\E[W]=1 \quad \text{and } \quad \P(W=0)=q < 1.
		\end{equation*}
	\end{theorem}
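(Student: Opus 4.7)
The plan is to prove the classical Kesten-Stigum theorem by the size-biased tree approach of Lyons, Pemantle, and Peres, which reduces $L^1$-convergence of the martingale $(W_n)$ to a Borel--Cantelli calculation along a single spine. The starting point is that $W_n = Z_n/m^n$ is a nonnegative $(\mathcal{F}_n)$-martingale, so $W_n \to W$ almost surely to some $W \in [0,\infty)$, and $\E[W] \leq 1$ by Fatou. The two claims are then: (i) $\E[W] = 1$ under $\E[Z_1 \log Z_1] < \infty$; (ii) $\{W=0\}$ coincides up to a null set with the extinction event, which has probability $q < 1$ since $m>1$.

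For (i), I would construct on a common probability space two measures on marked trees: the original law $\P$ of $\mathcal{T}$, and the size-biased law $\widehat{\P}$ under which there is a distinguished infinite ray (the spine) $o = v_0, v_1, v_2, \ldots$; each vertex $v_k$ on the spine has offspring distributed according to the size-biased distribution $\widehat{p}(k) = k p(k)/m$, one of the children is chosen uniformly to continue the spine, and the other children start independent ordinary Galton-Watson trees. A direct computation on the $n$-th generation shows
\begin{equation*}
\frac{d\widehat{\P}\big|_{\mathcal{F}_n}}{d\P\big|_{\mathcal{F}_n}} = W_n.
\end{equation*}
By a general Radon-Nikodym dichotomy (Durrett, Chapter 5), $W_n$ is uniformly $\P$-integrable and $\E[W]=1$ if and only if $W < \infty$ $\widehat{\P}$-almost surely; otherwise $W = 0$ $\P$-a.s. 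So it suffices to show $W < \infty$ $\widehat{\P}$-a.s.

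This is the main analytic obstacle, and it is where the $L\log L$ hypothesis enters. Under $\widehat{\P}$, the generation sizes decompose as
\begin{equation*}
Z_n = 1 + \sum_{k=0}^{n-1} \sum_{j} Z^{(k,j)}_{n-k-1},
\end{equation*}
where, for each $k$, $\{Z^{(k,j)}_{\cdot}\}_j$ are independent ordinary GW processes hanging off the non-spine children of $v_k$, whose total number $\widehat{V}_k$ has the size-biased law $\widehat{p}$. Using $\E[Z^{(k,j)}_{n-k-1}] = m^{n-k-1}$ and dividing by $m^n$, one gets the bound $W_n \leq m^{-1}\sum_{k\geq 0} m^{-k}\widehat{V}_k \cdot Y_k$ where the $Y_k$ are independent $\P$-averaged quantities of mean $\leq 1$. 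By Borel-Cantelli, the sum is $\widehat{\P}$-a.s.~finite provided $\sum_k \widehat{\P}(\widehat{V}_k > c\,m^k) < \infty$ for some $c > 0$, and a standard computation shows this is equivalent to $\E[Z_1 \log Z_1] < \infty$, which is exactly Assumption \ref{as:BP}.

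For (ii), given $\E[W]=1$, we have $\P(W>0) > 0$. Condition on $\mathcal{F}_1$: since the subtrees of the $Z_1$ children evolve independently with martingale limits $W^{(1)}, \ldots, W^{(Z_1)}$ distributed as $W$, and $W = m^{-1}\sum_{i=1}^{Z_1} W^{(i)}$, the probability $\eta := \P(W = 0)$ satisfies $\eta = \E[\eta^{Z_1}] = f(\eta)$. Thus $\eta \in \{q, 1\}$, and $\eta < 1$ by (i), so $\eta = q$. The only non-routine ingredient is the $L\log L$ estimate on the spine described above; everything else is the standard martingale and branching-process algebra.
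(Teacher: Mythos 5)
The paper does not actually prove this statement: it is the classical Kesten--Stigum theorem and is quoted with a pointer to Athreya--Ney \cite[Ch.~1, Sec.~10, Thm.~1]{AN04}. Your proposal therefore supplies a proof where the paper gives none, namely the Lyons--Pemantle--Peres size-biased tree (spine) argument, and its outline is correct: the Radon--Nikodym identity $d\widehat{\P}|_{\mathcal{F}_n}/d\P|_{\mathcal{F}_n}=W_n$, the dichotomy reducing $\E[W]=1$ to $\widehat{\P}(W<\infty)=1$, the Borel--Cantelli criterion $\sum_k\widehat{\P}(\widehat{V}_k>c\,m^k)<\infty$ being equivalent to $\E[Z_1\log Z_1]<\infty$ (since $\widehat{V}$ has the size-biased law, $\E[\log^+\widehat{V}]=\E[Z_1\log^+ Z_1]/m$), and the fixed-point argument $\eta=f(\eta)$, $\eta\in\{q,1\}$, identifying $\P(W=0)$ with $q$. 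One step is not literally valid as written: you cannot bound $W_n$ \emph{pathwise} by $m^{-1}\sum_k m^{-k}\widehat{V}_k Y_k$ with ``mean $\le 1$'' factors, because the subtree contributions are random and only their conditional means are controlled. The standard repair is to condition on the spine $\sigma$-algebra $\mathcal{G}=\sigma(\widehat{V}_0,\widehat{V}_1,\dots)$, compute $\E_{\widehat{\P}}[W_n\mid\mathcal{G}]\le m^{-n}+m^{-1}\sum_{k\ge 0}m^{-k}\widehat{V}_k$, and apply conditional Fatou to $\liminf_n W_n$; combined with the fact that $W_n$ converges $\widehat{\P}$-a.s.\ in $[0,\infty]$ (for instance because $1/W_n$ is a nonnegative $\widehat{\P}$-supermartingale), this yields $\widehat{\P}(W<\infty)=1$ and hence $\E[W]=1$. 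With that correction the argument is complete. Compared with the generating-function/truncation proof in the cited reference, your route is conceptually cleaner and explains why $\E[Z_1\log Z_1]<\infty$ is exactly the right hypothesis (the same dichotomy gives the converse), at the cost of setting up the change of measure on marked trees.
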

	A proof can e.g.~be found in \cite[Chapter 1, Section 10, Theorem 1]{AN04}.\\
	In the proof of Theorem~\ref{theorem_BRW} we use the fact that the maximum of independent random walks stochastically dominates the maximum of the branching random walk, see 
	Lemma~\ref{lemma_BRW_vs_ind_RW1} below. Lemma~\ref{lemma_BRW_vs_ind_RW2} comes from the following general statement which we will use in our proof as well.  Both lemmas are proven in~\cite[Lemma~5.1, 5.2]{GH18}.

\begin{lemma}  
\label{lemma_BRW_vs_ind_RW1}
		Let $(X_i)_{i \in \N}$ and $(Y_i)_{i \in \N}$ be independent sequences of (not necessarily independent) random variables. Furthermore, assume that the random variables 
		$Y_i, i \in \N$, have the same distribution. Then we have for all $k \in \N$ and $x \in \R$
		\begin{equation*}
		\P \Bigl( \max_{i \in \{1, \ldots, k\}} \{X_i+Y_1\} \leq x \Bigr) \geq \P \Bigl( \max_{i \in \{1, \ldots, k\}} \{X_i+Y_i\} \leq x \Bigr).
		\end{equation*}
	\end{lemma}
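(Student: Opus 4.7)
The plan is to condition on the sequence $(X_i)_{i \in \N}$ and reduce both sides to expressions involving only the common distribution function $F_Y$ of the $Y_i$. On the left-hand side, since the single random variable $Y_1$ is shared, $\max_{i \leq k}(X_i + Y_1) = Y_1 + \max_{i \leq k} X_i$, so the event $\{\max_{i \leq k}(X_i+Y_1) \leq x\}$ reduces to $\{Y_1 \leq x - \max_{i \leq k} X_i\}$. Using the assumed independence between $(X_i)$ and $(Y_i)$, conditioning yields
\begin{equation*}
\P\Bigl( \max_{i \leq k}\{X_i + Y_1\} \leq x\Bigr) = \E\bigl[F_Y\bigl(x - \max_{i \leq k} X_i\bigr)\bigr].
\end{equation*}

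For the right-hand side, conditioning on $(X_1,\ldots,X_k)$ and using independence of the $Y_i$ together with the fact that they all have distribution function $F_Y$, I would write
\begin{equation*}
\P\Bigl( \max_{i \leq k}\{X_i + Y_i\} \leq x\Bigr) = \E\Bigl[\prod_{i=1}^k F_Y(x - X_i)\Bigr].
\end{equation*}

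The key observation is then that $F_Y$ is non-decreasing, so $F_Y(x - \max_{i \leq k} X_i) = \min_{i \leq k} F_Y(x - X_i)$, and each factor $F_Y(x - X_i)$ lies in $[0,1]$. A product of numbers in $[0,1]$ is dominated pointwise by their minimum, hence
\begin{equation*}
\prod_{i=1}^k F_Y(x - X_i) \leq \min_{i \leq k} F_Y(x - X_i) = F_Y\bigl(x - \max_{i \leq k} X_i\bigr)
\end{equation*}
almost surely. Taking expectations yields the desired inequality.

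There is no substantial obstacle here; the statement is essentially a deterministic pointwise inequality after conditioning, combined with the monotonicity of $F_Y$. The only thing one must be careful about is the independence structure: that $(X_i)_{i\in\N}$ is independent of $(Y_i)_{i\in\N}$ is needed so that the conditioning on $(X_i)$ leaves the distribution of each $Y_i$ unchanged, which legitimizes the two formulas above. The fact that the $X_i$ need not be independent plays no role in the argument.
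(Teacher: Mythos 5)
Your overall strategy (condition on $(X_i)$, reduce both sides to the common distribution function $F_Y$, and use its monotonicity) is the right one; the paper itself does not reprove this lemma but refers to \cite[Lemma~5.1]{GH18}. However, there is one genuine flaw in your argument: for the right-hand side you write
\begin{equation*}
\P\Bigl( \max_{i \leq k}\{X_i + Y_i\} \leq x\Bigr) = \E\Bigl[\prod_{i=1}^k F_Y(x - X_i)\Bigr],
\end{equation*}
invoking ``independence of the $Y_i$''. But the lemma explicitly allows the $Y_i$ to be dependent among themselves --- only the two \emph{sequences} $(X_i)$ and $(Y_i)$ are assumed independent of each other, while within each sequence the variables are ``not necessarily independent''. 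This generality is exactly what the paper needs: the lemma is applied with $Y_v=S_v$, $v\in D_{n-1}$, the positions of the particles of the branching random walk, which are identically distributed but strongly dependent through shared ancestors. So the product identity is not available, and as written your proof only covers a special case that does not suffice for the application.

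The repair is immediate and makes the argument even more elementary. Conditionally on $(X_1,\dots,X_k)$, the event on the right-hand side is the intersection $\bigcap_{i\leq k}\{Y_i\leq x-X_i\}$, and an intersection has probability at most the minimum of the individual probabilities; since $(Y_i)$ is independent of $(X_i)$ and each $Y_i$ has distribution function $F_Y$, this gives
\begin{equation*}
\P\Bigl(\bigcap_{i\leq k}\{Y_i\leq x-X_i\}\Bigm\vert X_1,\dots,X_k\Bigr)\leq \min_{i\leq k}F_Y(x-X_i)=F_Y\bigl(x-\max_{i\leq k}X_i\bigr),
\end{equation*}
with no independence assumption within $(Y_i)$. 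Taking expectations yields exactly your expression for the left-hand side, and the claimed inequality follows. With the product bound replaced by this intersection bound, your proof is correct.
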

\begin{lemma} 
\label{lemma_BRW_vs_ind_RW2}
		For all $n \in \N$ and $x \in \R$ 
		\begin{equation}\label{domino}
			\P (M_n \leq x) \geq \P (\tilde{M}_n \leq x). 
		\end{equation}
	\end{lemma}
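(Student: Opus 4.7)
The plan is to prove \eqref{domino} by induction on $n$, with Lemma \ref{lemma_BRW_vs_ind_RW1} as the key tool. The base case $n=1$ is immediate: both $M_1$ and $\tilde M_1$ are the maximum of $Z_1$ iid copies of $X$, hence they have the same distribution and \eqref{domino} holds with equality.

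For the inductive step, I would first decompose the branching random walk along the first generation. Writing $X^{(i)}$ for the displacement of the $i$-th first-generation particle ($i = 1,\ldots,Z_1$) and $M_{n-1}^{(i)}$ for the rightmost position after $n-1$ further generations in the subtree rooted at that particle (measured relative to its position), we have
\[
M_n = \max_{1 \leq i \leq Z_1}\bigl( X^{(i)} + M_{n-1}^{(i)}\bigr),
\]
where, conditionally on $Z_1$, the pairs $(X^{(i)}, M_{n-1}^{(i)})$ are iid, with $X^{(i)} \stackrel{d}{=} X$ independent of $M_{n-1}^{(i)} \stackrel{d}{=} M_{n-1}$. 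As a bridge I introduce $\hat M_n := \max_{1 \leq i \leq Z_1}\bigl(X^{(i)} + \hat N^{(i)}\bigr)$, where the $\hat N^{(i)}$ are iid copies of $\tilde M_{n-1}$, independent of the $X^{(i)}$ and of $Z_1$. The induction hypothesis (which gives $M_{n-1} \leq_{\mathrm{st}} \tilde M_{n-1}$) combined with a monotone (quantile) coupling within each $i$ yields $M_n \leq \hat M_n$ almost surely, and hence $\P(M_n \leq x) \geq \P(\hat M_n \leq x)$.

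The core step is then to show $\P(\hat M_n \leq x) \geq \P(\tilde M_n \leq x)$. To this end I would split each of the $Z_n$ iid walks defining $\tilde M_n$ into its first step and its last $n-1$ steps, and group the particles at time $n$ by their first-generation ancestor. Conditioning on the Galton-Watson tree up to generation $n-1$ (in particular on $Z_1$ and on the subtree sizes $Z_{n-1}^{(1)}, \ldots, Z_{n-1}^{(Z_1)}$), the groups are conditionally independent, and within the $i$-th group Lemma \ref{lemma_BRW_vs_ind_RW1} applies with the first steps playing the role of the $Y_i$ and the remaining $(n-1)$-step walks playing the role of the $X_i$: replacing the iid first steps within the group by a single shared copy $X^{(i)}$ only increases the probability of the group maximum being $\leq x$. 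Multiplying this inequality across the conditionally independent groups and then integrating against the tree distribution, using the fact that conditional on $Z_{n-1}^{(i)}$ the maximum of $Z_{n-1}^{(i)}$ iid $(n-1)$-step walks has the distribution of $\tilde M_{n-1}$, gives exactly the desired inequality.

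The main obstacle I expect is bookkeeping: identifying the correct $\sigma$-algebra to condition on so that the independence hypothesis of Lemma \ref{lemma_BRW_vs_ind_RW1} holds group by group, and then matching the conditional distributions on the two sides after the replacement of iid first steps by a shared one. Once the conditioning is set up cleanly, the probabilistic content of the inductive step is essentially a single application of Lemma \ref{lemma_BRW_vs_ind_RW1} followed by taking expectations.
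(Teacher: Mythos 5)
Your proof is correct and takes essentially the route the paper intends: the paper defers to \cite[Lemma~5.2]{GH18} but states explicitly that Lemma~\ref{lemma_BRW_vs_ind_RW2} ``comes from'' Lemma~\ref{lemma_BRW_vs_ind_RW1}, and the argument there is likewise an induction on $n$ that decomposes over the first generation and applies Lemma~\ref{lemma_BRW_vs_ind_RW1} group by group to replace the i.i.d.\ first steps of the walks sharing a first-generation ancestor by a common one. The only detail worth making explicit in your write-up is the convention $\max\emptyset=-\infty$ for subtrees that die out (the lemma is stated under $\P$, not $\P^*$), which your bookkeeping already accommodates since both sides carry the same atom at $-\infty$.
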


	The statement of Lemma~\ref{lemma_BRW_vs_ind_RW2} is also true with respect to $\P^*$.

We will also use the following tail asymptotic for stretched exponential random variables, which follows from~\cite[Theorem 8.3]{DDS08} .
\begin{lemma}
Let Assumption~\ref{assumption_RW} be in force.
		If $x_n \gg n^{\sigma}$, where $ \sigma = \frac{1}{2-2r}$, then
		\begin{equation}\label{sumasmax}
			\P\left[ S_n > x_n \right] \sim n \P\left[ X>x_n \right].
		\end{equation}
	\end{lemma}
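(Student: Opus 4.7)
The plan is to establish matching lower and upper bounds
$$\P(S_n > x_n) \gtrsim n \P(X > x_n) \quad \text{and} \quad \P(S_n > x_n) \lesssim n \P(X > x_n),$$
which together give the stated equivalence. Throughout I fix a small $\varepsilon > 0$ and send it to $0$ at the end, using that the slowly varying factor $a_+$ satisfies $a_+((1 \pm \varepsilon)x_n)/a_+(x_n) \to 1$.

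For the lower bound I would use the \emph{single big jump} decomposition
$$\{S_n > x_n\} \supseteq \bigcup_{k=1}^n \Bigl(\{X_k > (1+\varepsilon) x_n\} \cap \{S_n - X_k > -\varepsilon x_n\}\Bigr).$$
Because $r < 1$, the threshold satisfies $\sigma = 1/(2-2r) > 1/2$, so $x_n \gg \sqrt n$ and the central limit theorem for $S_{n-1}$ gives $\P(S_n - X_k > -\varepsilon x_n) \to 1$. Also $\sigma$ is only polynomially large, so $x_n^r \gg \log n$ and hence $n \P(X > x_n) \to 0$. Applying Bonferroni to the union yields
$$\P(S_n > x_n) \geq n \P(X > (1+\varepsilon)x_n)(1 - o(1)) - \tbinom{n}{2} \P(X > (1+\varepsilon) x_n)^2,$$
and the second term is negligible against $n \P(X > x_n)$ since $n \P(X > x_n) \to 0$. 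Letting $\varepsilon \to 0$ closes this side.

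For the upper bound I would split according to the maximum $M_n^* := \max_{k \leq n} X_k$:
$$\P(S_n > x_n) \leq \P\bigl(M_n^* > (1-\varepsilon) x_n\bigr) + \P\bigl(S_n > x_n,\ M_n^* \leq (1-\varepsilon) x_n\bigr).$$
The first term is at most $n \P(X > (1-\varepsilon) x_n)$ by a union bound, which is the right order. For the second term, on the good event each summand is bounded by $(1-\varepsilon) x_n$, so letting $\bar X_k := X_k \wedge (1-\varepsilon) x_n$ the Chernoff bound gives
$$\P\bigl(\bar S_n > x_n\bigr) \leq e^{-s_n x_n} \bigl(\E[e^{s_n \bar X_1}]\bigr)^n.$$
Choosing $s_n$ of order $x_n^{r-1}$, one expands $\log \E[e^{s_n \bar X_1}] = \tfrac12 s_n^2 \E[X^2] + o(s_n^2)$ plus a boundary contribution from $\{X \approx x_n\}$, and the resulting exponent is of order $-x_n^2/n$ up to constants. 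The threshold $x_n \gg n^{1/(2-2r)}$ is exactly the regime in which $x_n^2/n \gg x_n^r$, making this bound much smaller than $n e^{-\lambda_+ x_n^r}$. After sending $\varepsilon \to 0$, the upper bound matches.

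The main obstacle is the Chernoff estimate in the upper bound: one must expand the cumulant of the truncated variable $\bar X_1$ carefully enough to separate the quadratic (Gaussian) contribution from the boundary contribution, and verify that the choice $s_n \asymp x_n^{r-1}$ produces a decay rate strictly faster than $n \P(X > x_n)$ precisely when $x_n$ exceeds the critical scale $n^{1/(2-2r)}$. Rather than carrying this computation out in detail, the statement is extracted from Theorem 8.3 of~\cite{DDS08}, where it is proved in greater generality for tails of the form $\P(X > t) = a(t) e^{-\psi(t)}$ with $\psi$ in a class covering~\eqref{upptail}.
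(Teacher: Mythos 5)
Your final, operative step---quoting Theorem 8.3 of \cite{DDS08}---is exactly what the paper does: the lemma is stated there with no proof beyond that citation, so in the end you and the paper rely on the same external result. To that extent the proposal is fine.

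However, the self-contained sketch you give before falling back on \cite{DDS08} has a genuine flaw, and it is worth naming because it is specific to stretched exponential (as opposed to regularly varying) tails. Your lower bound ends with $\P(S_n>x_n)\ge n\,\P\bigl(X>(1+\varepsilon)x_n\bigr)(1-o(1))$ and the claim that ``letting $\varepsilon\to 0$ closes this side.'' It does not: for fixed $\varepsilon>0$,
\begin{equation*}
\frac{\P\bigl(X>(1+\varepsilon)x_n\bigr)}{\P(X>x_n)}=\frac{a_+\bigl((1+\varepsilon)x_n\bigr)}{a_+(x_n)}\,e^{-\lambda_+\left((1+\varepsilon)^r-1\right)x_n^r}\longrightarrow 0
\end{equation*}
as $n\to\infty$, since $x_n^r\to\infty$. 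So the bound you obtain is exponentially smaller than $n\,\P(X>x_n)$, and taking $\varepsilon\to 0$ \emph{after} $n\to\infty$ recovers nothing; you would only get logarithmic asymptotics, not the equivalence $\sim$. The same problem affects the term $n\,\P\bigl(X>(1-\varepsilon)x_n\bigr)$ in your upper bound, which is exponentially \emph{larger} than the target. The correct single-big-jump argument uses an \emph{additive} cutoff: one intersects $\{X_k>x_n+y_n\}$ with $\{S_n-X_k>-y_n\}$ for a sequence $y_n$ satisfying $\sqrt{n}\ll y_n\ll x_n^{1-r}$, so that $\P(S_{n-1}>-y_n)\to 1$ by the CLT while $(x_n+y_n)^r-x_n^r\le r\,y_n x_n^{r-1}\to 0$ keeps the tail ratio at $1$. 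Such a $y_n$ exists precisely when $x_n^{1-r}\gg\sqrt{n}$, i.e.\ $x_n\gg n^{1/(2-2r)}$ --- this, rather than the comparison $x_n^2/n\gg x_n^r$ (which only requires $x_n\gg n^{1/(2-r)}$, a weaker condition), is where the threshold $\sigma=\tfrac{1}{2-2r}$ enters the big-jump term. Since you ultimately defer to \cite{DDS08} the conclusion stands, but the sketch as written would not upgrade to a correct proof without replacing the multiplicative slack by this additive one.
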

	
\subsection{Main steps}  
	In this section we prove the main results of the paper.  We first prove the results for the branching random walk using the results for the independent walks. Then, we 
	give the proof of the results for the independent walks.

\subsection*{Branching random walk} \label{proofs_BRW}

\begin{proof}[Proof of Theorem \ref{theorem_BRW}]
\textbf{Case (1)}: $x > \alpha$\\
The upper bound immediately follows from Theorems~\ref{theorem_ind_RW_Schröder} and~\ref{theorem_ind_RW_B} combined with Lemma~\ref{lemma_BRW_vs_ind_RW2}. 
	It remains to prove the lower bound. For $v \in D_{n-1}$ denote the rightmost descendant of $v$ at time $n$ by $M^v_1$. By Lemma~\ref{lemma_BRW_vs_ind_RW1},
	\begin{align}
	\P^* \Bigl(\frac{M_n}{n^{1/r}} \geq x \Bigr)
	& = \P^* \Bigl( \max \limits_{v \in D_{n-1}} \frac{M_1^v - S_v}{n^{1/r}} + \frac{S_v}{n^{1/r}} \geq x \Bigr) \notag \\ 
	& \geq \P^* \Bigl( \max \limits_{v \in D_{n-1}} \frac{M_1^v - S_v}{n^{1/r}} + \frac{S_{n-1}}{n^{1/r}} \geq x \Bigr) \notag \\
	& \geq \P^* \Bigl(\max \limits_{v \in D_{n-1}} \frac{M_1^v-S_v }{n^{1/r}} \geq x \Bigr) \cdot \P \Bigl( \frac{S_{n-1}}{n^{1/r}} \geq 0 \Bigr). \label{proof_theorem_BRW_case_x>x*_1}
	\end{align}
	The second probability on the right hand side of \eqref{proof_theorem_BRW_case_x>x*_1} converges to $\frac{1}{2}$ by the central limit theorem. It remains to estimate the 
	first probability on the right hand side of \eqref{proof_theorem_BRW_case_x>x*_1}. Note that $(M_1^v - S_v)_{v \in D_{n-1}}$ are independent under $\P$. 
	Therefore, using the inequality $1-(1-z)^y \geq zy(1-zy)$ valid for $z \in [0,1]$ and $y\geq 0$, we get for $c > 0$
	\begin{align} \label{proof_theorem_BRW_case_x>x*_2}
	& \quad \ \P^* \Bigl(\max \limits_{v \in D_{n-1}} \frac{M_1^v - S_v}{n^{1/r}} \geq x \Bigr) \notag \\
	& \geq \P^* \bigl(Z_{n-1} \geq c m^n \bigr) \cdot \biggl( 1- \Bigl(1- \P \Bigl(\frac{X}{n^{1/r}} \geq x \Bigr) \Bigr)^{c m^n} \biggr) \notag\\
	& \geq \P^* \bigl(W_{n-1} \geq cm \bigr)  \P \Bigl(\frac{X}{n^{1/r}} \geq x \Bigr) c m^n \Bigl(1-\P \Bigl(\frac{X}{n^{1/r}} \geq x \Bigr) c m^n \Bigr).
	\end{align}
	For the first factor on the right hand side of \eqref{proof_theorem_BRW_case_x>x*_2} it holds that $\liminf_{n \to \infty} \P^*(W_{ n-1} \geq cm ) \geq \P^*(W > cm )>0$ for $c$ small enough.
	Furthermore, the last term on the right hand side of \eqref{proof_theorem_BRW_case_x>x*_2} converges to 1 by Assumption~\ref{assumption_RW}. 
	Combining \eqref{proof_theorem_BRW_case_x>x*_1} and \eqref{proof_theorem_BRW_case_x>x*_2} yields the lower bound.\\
\textbf{Case (2)}: $x < \alpha$ and $p(0)+p(1)>0$\\
Following the strategy explained in Section~\ref{sec:main}, we consider the situation where there are only $k^*$ particles at time $tn$ and one of them has an offspring with a large negative displacement while all others have no offspring. Afterwards, all particles move and reproduce as usual. For the lower bound, recall 
\eqref{gammadef}, let $t \in [0, \gamma]$ and fix 
	 $\varepsilon >0$. 
We have
	\begin{align} \label{proof_theorem_BRW_case_x<x*_1}
		\P^* \Bigl(\frac{M_n}{n^{1/r}} \leq x \Bigr)
		& \geq \P^* \Bigl(\frac{M_n}{n^{1/r}} \leq x \bigm\vert Z_{tn}=k^* \Bigr) \cdot \P^* (Z_{tn}=k^* ) \nonumber\\
		& \geq p(0)^{k^*-1} \P^* \Bigl(\frac{S_{tn}+M_{(1-t)n}}{n^{1/r}} \leq x \Bigr) \cdot \P^* (Z_{tn}=k^* ) \nonumber \\
		& \geq p(0)^{k^*-1} \P^* \Bigl(\frac{M_{(1-t)n}}{((1-t) n)^{1/r}} \leq \alpha + \varepsilon \Bigr) \cdot \P\Bigl(\frac{X_1}{n^{1/r}} \leq -\bigl( (\alpha+ \varepsilon)(1-t)^{1/r} -x \bigr) \Bigr) \nonumber \\
		& \quad \cdot \P(S_{tn-1} \leq 0) \cdot \P^* (Z_{tn}=k^* ).
	\end{align}
Since the first probability on the right hand side of \eqref{proof_theorem_BRW_case_x<x*_1} converges to 1 almost surely as $n \to \infty$ by 
\eqref{linspeed}, we get
\begin{align*}
\P^* \Bigl(\frac{M_n}{n^{1/r}} \leq x \Bigr) \geq \exp \Bigl( -\Bigl[t \rho + \lambda_- \Bigl((\alpha+ \varepsilon)(1-t)^{1/r}-x \Bigr)^r \Bigr]n + o(n) \Bigr).
\end{align*}
Since this inequality holds for all $t \in [0, \gamma]$, letting $n \to \infty$ followed by $\varepsilon \to 0$ yields, 
\begin{align*}
\liminf_{n \to \infty} \frac{1}{n} \log \P^* \Bigl(\frac{M_n}{n^{1/r}} \leq x \Bigr) \geq \sup_{t \in [0, \gamma ]}\left( - H(x)\right) = -\inf_{t \in [0, \gamma]}  H(x).
\end{align*}
In what follows, we often omit integer parts for better readability. For the upper bound, recalling \eqref{gammadef}, assume $t < \gamma$ and define
\begin{equation*}
T_n = \inf \Bigl\{t \geq 0 \colon Z_{tn} \geq n^3 \Bigr\}
\end{equation*}
and for $\varepsilon_1>0$ introduce the set
\begin{equation*} 
F=F(\varepsilon_1)= \Bigl\{\varepsilon_1, 2 \varepsilon_1, \ldots, \Bigl\lceil 
\gamma \varepsilon_1^{-1} \Bigr\rceil \varepsilon_1 \Bigr\}.
\end{equation*}
By the definition of $T_n$ we then have
\begin{align} \label{proof_theorem_BRW_case_x<x*_2}
& \quad \ \P^* \Bigl(\frac{M_n}{n^{1/r}} \leq x \Bigr) \nonumber\\
& \leq \P^* (T_n > \gamma ) + \sum_{t \in F} \P^* \Bigl(\frac{M_n}{n^{1/r}} \leq x \bigm\vert T_n \in \bigl(t - \varepsilon_1, t \bigr] \Bigr) \P^* \bigl(T_n \in \bigl(t - \varepsilon_1, t \bigr] \bigr) \nonumber \\
& \leq \P^* \bigl(  Z_{\gamma n} \leq n^3 \bigr)+ \sum_{t \in F} \P^* \Bigl(\frac{M_n}{n^{1/r}} \leq x \bigm\vert T_n \in \bigl(t - \varepsilon_1, t \bigr] \Bigr) \P^* (  Z_{(t- \varepsilon_1) n} \leq n^3).
\end{align}

Let $\varepsilon_2>0$ be such that $(1-t)^{1/r}(\alpha -\varepsilon_2) \geq x$ (which is possible since $t < \gamma$).
 Using Lemma~\ref{lemma_BRW_vs_ind_RW1} again,
\begin{align} \label{proof_theorem_BRW_case_x<x*_3}
& \quad \ \P^* \Bigl(\frac{M_n}{n^{1/r}} \leq x \bigm\vert T_n \in \bigl(t - \varepsilon_1, t \bigr] \Bigr) \nonumber \\
& \leq  \P^* \biggl(\max_{v \in D_{tn}} \frac{S_{tn}+ M^v_{(1-t)n}}{n^{1/r}} \leq x \Bigm\vert T_n \in \bigl(t - \varepsilon_1, t \bigr] \biggr) \nonumber \\
& \leq \P \Bigl(\frac{S_{tn}}{n^{1/r}} \leq -\bigl((1-t)^{1/r}(\alpha- \varepsilon_2)-x \bigr) \Bigr)+ \P^* \Bigl(\frac{M_{(1-t)n}}{((1-t)n)^{1/r}} \leq \alpha - \varepsilon_2 \Bigr)^{n^2} \notag \\
& \quad \ + \P^* \bigl( Z_{tn} \leq n^2 \bigm\vert  T_n \in (t - \varepsilon_1, t] \bigr).
\end{align}
Note that we ignored all but one particle at time $tn$. Further note that the second probability on the right hand side of \eqref{proof_theorem_BRW_case_x<x*_3} converges to $0$ by \eqref{linspeed} and therefore the second term in \eqref{proof_theorem_BRW_case_x<x*_3} decays faster than exponentially in $n$. For the third term on the right hand side of \eqref{proof_theorem_BRW_case_x<x*_3},
\begin{align} \label{proof_theorem_BRW_case_x<x*_4}
\P^* \bigl( Z_{tn} \leq n^2 \bigm\vert  T_n \in (t - \varepsilon_1, t] \bigr)
& \leq \P^* \bigl(\exists k \in \N \colon Z_k \leq n^2 \bigm\vert Z_0=n^3 \bigr) \notag\\
& \leq \binom{n^3}{n^2} q^{n^3-n^2} \leq \exp \bigl( (n^3-n^2) \log q + 3n^2 \log n \bigr).
\end{align}
In the second inequality we used the fact that for the event we consider, at most $n^2$ of the initial $n^3$ Galton-Watson trees can survive. Note that the Galton-Watson trees coming from different initial particles are independent.

Combining \eqref{proof_theorem_BRW_case_x<x*_2}, \eqref{proof_theorem_BRW_case_x<x*_3} and \eqref{proof_theorem_BRW_case_x<x*_4} and letting $\varepsilon_1, \varepsilon_2 \to 0$, we conclude after a straightforward calculation
\begin{equation*}
\limsup_{n \to \infty} \frac{1}{n} \log \P^* \Bigl(\frac{M_n}{n^{1/r}} \leq x \Bigr) \leq
 - \inf_{t \in [0,\gamma]} \Bigr\{t \rho + \lambda_- \bigl( \alpha(1-t)^{1/r} -x \bigr)^r \Bigr) \Bigr\} = -H(x).
\end{equation*}
\textbf{Case (3)}: $x < \alpha$ and $p(0)+p(1)=0$\\
Let $0< \varepsilon < \alpha - x$. Then we have
		\begin{align} \label{proof_theorem_BRW_case_x<x*_5}
			\P^* \Bigl(\frac{M_n}{n^{1/r}} \leq x \Bigr)
			& \geq \P^* \Bigl(\frac{M_n}{n^{1/r}} \leq x \bigm\vert Z_1=k^* \Bigr) \cdot \P^* (Z_1=k^* ) \nonumber\\
			& \geq \P^* \Bigl(\frac{X_1+M_{n-1}}{n^{1/r}} \leq x \Bigr)^{k^*} \cdot \P^* (Z_{1}=k^* ) \nonumber \\
			& \geq \P^* \Bigl(\frac{M_{n-1}}{ n^{1/r}} \leq \alpha + \varepsilon \Bigr)^{k^*} \cdot \P\Bigl(\frac{X_1}{n^{1/r}} \leq -\bigl( \alpha+ \varepsilon -x \bigr) \Bigr)^{k^*} 
			\cdot \P^* (Z_{1}=k^* ).
		\end{align}
		Since the first probability on the right hand side of \eqref{proof_theorem_BRW_case_x<x*_5} converges to $1$ as $n \to \infty$ by \eqref{linspeed}, we get
		\begin{align*}
			\P^* \Bigl(\frac{M_n}{n^{1/r}} \leq x \Bigr) \geq p(k^*)\exp \Bigl( -  \lambda_-k^* (\alpha+ \varepsilon-x)^r n + o(1) \Bigr).
		\end{align*}
		Letting $\varepsilon \to 0$ finishes the proof.
				For the upper bound note that $4 \log k^* \geq 2$ and in particular $|D_{4 \log n}| \geq n^2$ a.s. Take $0< \varepsilon < \alpha-x$. Using Lemma~\ref{lemma_BRW_vs_ind_RW1} and the inequality $(a+b)^k \leq 2^{k-1}(a^k + b^k)$ valid for all $a,b>0$ and $k \in \N$, we get
		\begin{align*}
			\P^* \Bigl(\frac{M_n}{n^{1/r}} \leq x \Bigr)  & \leq \P^* \Bigl(\frac{M_{n-1}+X}{n^{1/r}} \leq x \Bigr)^{k^*} \leq 
				\P^* \Bigl(\max_{v \in D_{4 \log n}} \frac{M^v_{n-1-4 \log n } + S_{4 \log n} + X}{n^{1/r}} \leq x \Bigr)^{k^*}  \\
& \leq \biggl(\P \Bigl(\frac{S_{4 \log n }}{n^{1/r}} \leq -(\alpha- \varepsilon-x ) \Bigr)+ \P^* \Bigl(\frac{M_{n-1- 4 \log n}}{n^{1/r}} \leq \alpha - \varepsilon \Bigr)^{n^2}	\biggr)^{k^*} \\			
				& \leq 2^{k^*-1}\P \Bigl(\frac{S_{4 \log n}}{n^{1/r}} \leq -(\alpha- \varepsilon-x ) \Bigr)^{k^*}+ 2^{k^*-1}\P^* \Bigl(\frac{M_{n-1- 4 \log n}}{n^{1/r}} \leq \alpha - \varepsilon \Bigr)^{k^*n^2}.   
	\end{align*}
Since the second probability on the right hand side  converges to $0$ as $n \to \infty$ by \eqref{linspeed}, we get, applying \eqref{sumasmax}
(with the random variables $-X_1, -X_2, \ldots $)
\begin{align*}
\P^* \Bigl(\frac{M_n}{n^{1/r}} \geq x \Bigr) \leq \exp \Bigl( - k^*\lambda_- (\alpha- \varepsilon-x)^r n + o(n) \Bigr).
\end{align*}
Letting $\varepsilon \to 0$ finishes the proof.
\end{proof}

\subsection*{Independent random walks} \label{proofs_ind_RW}

	\begin{proof}[Proof of Theorem \ref{theorem_ind_RW_Schröder}]
\textbf{Case (1)}: $x > \alpha$\\
For $z \in [0,1]$ and $y\geq 0$ we have $1-(1-z)^y \geq zy(1-zy)$. Following the strategy explained in Section~3, independence of the random walks and the aforementioned 
		inequality yield
		\begin{align} \label{proof_thm_ind_RW_1}
			\P^* \Bigl( \frac{\tilde{M}_n}{n^{1/r}} \geq x \Bigr)
				& = \E^* \biggr[ 1- \Bigr(1- \P \Bigl( \frac{S_n}{n^{1/r}} \geq x \Bigr) \Bigr)^{Z_n} \biggr] \nonumber\\
				& \geq \P^* \Bigl(Z_n \geq \frac{1}{2} m^n \Bigr) \cdot  \biggr( 1- \Bigr(1- \P \Bigl( \frac{S_n}{n^{1/r}} \geq x \Bigr) \Bigr)^{\frac{1}{2} m^n} \biggr) \nonumber \\
				& \geq \P^* \Bigl(W_n \geq \frac{1}{2} \Bigr) \P \Bigl( \frac{S_n}{n^{1/r}} \geq x \Bigr) \frac{1}{2} m^n \Bigl( 1- \P \Bigl( \frac{S_n}{n^{1/r}} \geq x \Bigr) \frac{1}{2} m^n \Bigr).
		\end{align}
		By Lemma~\ref{lem:LDP_RW}, $\P \bigl( \frac{S_n}{n^{1/r}} \geq x \bigr) \frac{1}{2} m^n \to  0$ as $n \to \infty$, since $\log m < I(x)$. 
		For the first factor on the right hand side of \eqref{proof_thm_ind_RW_1} we have 
		$\liminf_{n \to \infty} \P^* (W_n \geq \frac{1}{2} ) \geq \P^*(W > \frac{1}{2}) >0$, since $\E^*[W]=\frac{1}{1-q}>1$ by Theorem~\ref{Lemma_GWP_martingale}. This establishes the estimate
		\begin{equation*}
			\P^* \Bigl( \frac{\tilde{M}_n}{n^{1/r}} \geq x \Bigr) \geq  \exp \bigl( - (I(x) - \log m )n + O(1) \bigr) = \exp \bigl( - (\lambda_+ x^r - \log m )n + O(1) \bigr).
		\end{equation*}
		For the upper bound, the Markov inequality yields, recalling \eqref{defcopy}
		\begin{align*}
			\P^* \Bigl( \frac{\tilde{M}_n}{n^{1/r}} \geq x \Bigr) & = \P^* \Bigl( \sum_{i=1}^{Z_n} \mathds{1}_{\{S_n^{(i)} \geq n^{1/r}x\}} \geq 1 \Bigr) \leq \P \Bigl( \frac{S_n}{n^{1/r}} \geq x \Bigr)\E^*[Z_n] \\
				& \leq \P \Bigl( \frac{S_n}{n^{1/r}} \geq x \Bigr) \frac{m^n}{1-q},
		\end{align*}
		which immediately implies the claim by the merit of Lemma~\ref{lem:LDP_RW}\\
		
 \textbf{Case (2)}: $0 < x < \alpha$\\
Since the rate function $I$ is strictly increasing on the interval $[0,\alpha]$, we can choose $\varepsilon>0$ such that $\varepsilon < I(x) < \log m - \varepsilon$. We prove the 
	upper bound first. Using the inequality $1-y \leq e^{-y}$ and the third part of Lemma~\ref{lem:LDBP_S}, we have for $n$ large enough
	\begin{align*}
		\P^* \Bigl(\frac{\tilde{M}_n}{n^{1/r}} \leq x \Bigr)
		& = \E^* \biggl[\Bigl(1-\P \Bigl(\frac{S_n}{n^{1/r}} > x \Bigr)\Bigr)^{Z_n} \biggr] \leq \E^* \biggl[\exp \Bigl(-\P \Bigl(\frac{S_n}{n^{1/r}} > x \Bigr) Z_n \Bigr) \biggr]\\
		& \leq \P^* \Bigl(Z_n \leq e^{(I(x)+\varepsilon)n} \Bigr) + \exp \bigr(-e^{\varepsilon n +o(n)} \bigr)= \exp \Bigl(- \bigl( I^{\text{GW}}(I(x) +\varepsilon \bigr) n + o(n) \Bigr).
	\end{align*}
	Letting $\varepsilon \to 0$ gives the desired upper bound, since $I^{\text{GW}}$ defined in \eqref{def_I_GW_S} is continuous. 
	The proof for the lower estimate is similar. Since $\P (\frac{S_n}{n^{1/r}} > x) < e^{-1}$ for $n$ large enough,  inequality $1-y>e^{-ey}$ for $y \in [0, e^{-1}]$
	yields 
	\begin{align*}
		\P^* \Bigl(\frac{\tilde{M}_n}{n^{1/r}} \leq x \Bigr)
		& = \E^* \biggl[\Bigl(1-\P \Bigl(\frac{S_n}{n^{1/r}} > x \Bigr)\Bigr)^{Z_n} \biggr] \geq \E^* \biggl[\exp \Bigl(-e \cdot \P \Bigl(\frac{S_n}{n^{1/r}} > x \Bigr) Z_n \Bigr) \biggr]\\
		& \geq \P^* \Bigl(Z_n \leq e^{(I(x)-\varepsilon)n} \Bigr) \cdot \exp \bigr(-e^{-\varepsilon n +o(n)}  \bigr)= \exp \Bigl(- I^{\text{GW}}(I(x) - \varepsilon ) n + o(n) \Bigr).
	\end{align*}
	Letting $\varepsilon \to 0$ concludes the proof for $0<x<\alpha$.\\
	
\textbf{Case (3)}: $x \leq 0$\\
We first consider $x <0$. For the upper bound we have for $K \in \N$
	\begin{align}\label{eq:proof_thm_ind_RW_3}
		\P^* \Bigl(\frac{\tilde{M}_n}{n^{1/r}} \leq x \Bigr) &= \E^* \biggl[\P \Bigl(\frac{S_n}{n^{1/r}} \leq x \Bigr)^{Z_n} \biggr] \nonumber \\
		 	& \leq \sum_{k=1}^K \P \Bigl(\frac{S_n}{n^{1/r}} \leq x \Bigr)^k \P^*(Z_n=k) + \P \Bigl(\frac{S_n}{n^{1/r}} \leq x \Bigr)^K.
	\end{align}
	By the first part of Lemma~\ref{lem:LDBP_S}, the probability $\P(Z_n=k)$ is of order $\exp(-\rho n + o(n))$ for all $k \in A$ (defined in \eqref{def_A}) and $\P(Z_n=k)=0$ otherwise. 
	Therefore, for all $K \in \N$,  
	\begin{equation*}
		\limsup_{n \to \infty} \frac{1}{n} \log \P^* \Bigl(\frac{\tilde{M}_n}{n^{1/r}} \leq x \Bigr)  \leq  \max \bigl\{-(k^*I(x)+\rho), -KI(x) \bigr\}. 
	\end{equation*}
	Hence, letting $K \to \infty$ proves the upper bound since $I(x)>0$ for $x<0$.  The lower bound can be established similarly:
	\begin{align*}
		\P^* \Bigl(\frac{\tilde{M}_n}{n^{1/r}} \leq x \Bigr)
		& = \E^* \biggl[\P \Bigl(\frac{S_n}{n^{1/r}} \leq x \Bigr)^{Z_n} \biggr] \geq \P \Bigl(\frac{S_n}{n^{1/r}} \leq x \Bigr)^{k^*} \cdot \P(Z_n=k^*)\\
		&= \exp \bigl( -(k^*I(x)+\rho)n +o(n) \bigr),
	\end{align*}
	which shows the lower bound. For $x=0$ the result follows from continuity of the rate function $I^\textnormal{ind}$ at 0.
\end{proof}

\begin{proof}[Proof of Theorem~\ref{theorem_ind_RW_B}]
\textbf{Case (1)}: $x > \alpha$\\
	In this case one can use exactly the same arguments as in the proof of Theorem~\ref{theorem_ind_RW_Schröder}.\\
	
\textbf{Case (2)}: $0 \leq x < \alpha$\\
	Take $a \in (k^*, m)$ such that
	\begin{equation*}
		\log a = I(x) \frac{\log m - \log k^*}{\log m} + \log k^*.
	\end{equation*}
	Lemma~\ref{lem:LDBP_B} provides us with
	\begin{equation*}
		\log \left(- \log \P^*(Z_n\leq a^n)\right) \sim - n \log k^* \left(1  - \frac{\lambda_+x^r}{\log m}\right),
	\end{equation*}
	while the choice of $a$ guarantees
	\begin{equation*}
		\log \left(a^n \P(S_n\leq xn^{1/r})\right) \sim - n \log k^* \left(1  - \frac{\lambda_+x^r}{\log m}\right).
	\end{equation*}
	If we now recall the bounds established in the proof of Theorem~\ref{theorem_ind_RW_Schröder}, i.e.
	\begin{equation*}
		\P(Z_n\leq a^n) \exp ( -e a^n \P(S_n>x n^{1/r}) ) \leq \P^*(\tilde{M}_n \leq xn^{1/r})
	\end{equation*}
	and
	\begin{equation*}
		\P^*(\tilde{M}_n \leq xn^{1/r}) \leq \P(Z_n\leq a^n)  + \exp ( -a^n \P(S_n>x n^{1/r}) ) 
	\end{equation*}
	the claim will follow.\\
	
	\textbf{Case (3)}: $x < 0$\\
For the upper bound just write
	\begin{equation*}
		\P^*(\tilde{M}_n \leq xn^{1/r}) = \E^*[ \P(S_n\leq xn^{1/r})^{Z_n}] \leq \P(S_n\leq xn^{1/r})^{(k^*)^n}
	\end{equation*}
	which implies the claim. For the lower bound we have
	\begin{equation*}
		\P^*(\tilde{M}_n \leq xn^{1/r}) = \E^*[ \P(S_n\leq xn^{1/r})^{Z_n}] \geq \P(S_n\leq xn^{1/r})^{(k^*)^n} \P(Z_n=(k^*)^n)
	\end{equation*}
	since
	\begin{equation*}
		\log \P(Z_n=(k^*)^n) = \frac{(k^*)^n-1}{k^*-1} \log p(k^*)  = o(n(k^*)^n)
	\end{equation*}
	and the claim follows.
\end{proof}

\bibliographystyle{amsplain}

\Addresses

\end{document}